\newcommand{\correspondingauthor}[1]{\textbf{Corresponding Author}: Ryan Mahendra Wilis, Essex, University of Essex, rw20018@essex.ac.uk}
\newtheorem{theorem}{Theorem}[section]
\newtheorem{lemma}[theorem]{Lemma}
\newtheorem{corollary}[theorem]{Corollary}
\newenvironment{proof}[1][Proof]{\begin{trivlist}
\item[\hskip \labelsep {\bfseries #1}]}{\end{trivlist}}
\newcommand{\qed}{\nobreak \ifvmode \relax \else
      \ifdim\lastskip<1.5em \hskip-\lastskip
      \hskip1.5em plus0em minus0.5em \fi \nobreak
      \vrule height0.75em width0.5em depth0.25em\fi}
\begin{document}

\thispagestyle{plain}




\vspace{2pt}
\begin{center}
    \LARGE{\textbf{An Algebraic Extension of the General Leibniz Product rule for  Fractional Indices and Applications}}
\end{center}

\begin{center}
\vspace{4pt}
\large
    Ryan Mahendra Wilis\textsuperscript{1} \footnote{\correspondingauthor{}}
    
\small
   \textsuperscript{1} University of Essex

\end{center}

\begin{small}
\begin{center}
\vspace{9pt}
\textbf{Abstract}    
\end{center}

\begin{adjustwidth}{20pt}{20pt}
\small \noindent 
This paper presents a reformulation of the Leibniz product rule as a finite sum that expresses the fractional derivative of the product of two differentiable functions. This paper then proves the cases for when the product consists of an arbitrary differentiable function and a Sheffer sequence , and the case for when the product consists of an arbitrary differentiable function and a confluent hypergeometric limiting function with a positive integer order.
Finally, this paper provides example expressions for certain Sheffer sequences: any Apell sequence, the falling factorial,  the rising factorial,  the exponential polynomial,  and the Associated Laguerre polynomial. 
 
\end{adjustwidth}

\end{small}

\title{An Umbral Extension of the General Leibniz Product rule for  Fractional Indices and Applications}
\author{Ryan Mahendra Wilis}

\section{Introduction}

The field of Fractional calculus has had some recent developments with respect to its theoretical framework and its applications (See:  \citep{fractional theory 1}, \citep{fractional theory 2}, \citep{fractional theory 3}, \citep{fractional theory 4}, \citep{fractional theory 5}, \citep{fractional theory 6}, \citep{fractional theory 7}). One of the main results is a new fractio-differential operator introduced by Antangana and Baleneau \citep{anataga} with a non-local and non-singular kernel. Recent research has also discussed investigating the fractional derivative of the product of certain special functions (See: \citep{product} and \citep{product 2}), and the fractional derivative of the certain classeses of Hypergeometric functins have also been discussed in recent literature (See: 
\citep{hypergeometric} and \citep{hypergeometric 2})

On the other hand, the study of Sheffer sequences , formally introduced and analysed by Roman \citep{Roman theory} through the theory of Umbral calculus, has had many developments with respect to its applications in many fields of mathematics, for instance in the study of p-adic invariant integrals and other topics.
(See: \citep{sheffer 1}, \citep{sheffer 2}, \citep{sheffer 3},
\citep{sheffer 4}).

This paper aims to reconcile the field of Fractional calculus and the study of Sheffer sequences to obtain new results pertaining the fractional derivatives of the products of  certain special functions. We analyse the case for when the product consists of an arbitrary differentiable function and a Sheffer sequence, and an arbitrary differentiable function and the Confluent hypergeometric limiting function.

\section{ The Truncated Leibniz product rule for non-integers}

Let us start with the General Leibniz rule of differentiation, where $f(x)$ and $g(x)$ are  n-times differentiable functions.

\begin{theorem}
\label{General Leibniz rule of differentiation}
For $n \in \mathbb{N}$ 
\[ (\partial_{x})^{n} [f(x)g(x)] = \sum_{m=0}^{n} \binom{n}{m} f^{(n-m)}(x) g^{(m)}(x)
\]
\end{theorem}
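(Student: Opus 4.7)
The plan is a straightforward induction on $n$. The base case $n=1$ is just the ordinary product rule $\partial_x[f(x)g(x)] = f'(x)g(x) + f(x)g'(x)$, which matches the right-hand side with $\binom{1}{0} = \binom{1}{1} = 1$. (Equivalently one can take $n=0$, where both sides reduce to $f(x)g(x)$.)

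For the inductive step, I would assume the identity holds for some $n \geq 1$, apply $\partial_x$ to both sides, and distribute using the product rule inside the sum to obtain
\[
(\partial_x)^{n+1}[f(x)g(x)] = \sum_{m=0}^{n} \binom{n}{m} \bigl[ f^{(n-m+1)}(x) g^{(m)}(x) + f^{(n-m)}(x) g^{(m+1)}(x) \bigr].
\]
I would then split this into two sums, and in the second sum reindex $m \mapsto m-1$ so that both sums are indexed by the same derivative pattern $f^{(n+1-m)} g^{(m)}$. After separating the boundary terms (the $m=0$ term in the first sum and the $m=n+1$ term in the second), the bulk of the sum runs over $1 \leq m \leq n$ with coefficient $\binom{n}{m} + \binom{n}{m-1}$.

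The essential algebraic input at this point is Pascal's identity $\binom{n}{m} + \binom{n}{m-1} = \binom{n+1}{m}$, which collapses the two sums into a single sum with the correct binomial weight. The boundary terms supply the missing $m=0$ and $m=n+1$ contributions, with coefficients $\binom{n}{0} = \binom{n+1}{0}$ and $\binom{n}{n} = \binom{n+1}{n+1}$, completing the induction. There is no real obstacle; the only care required is bookkeeping of the index shift and the two boundary terms, which is entirely mechanical.
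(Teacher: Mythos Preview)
Your proposal is correct and is precisely the approach the paper indicates: the paper simply remarks that the theorem ``can be proven by the principle of mathematical induction and the product rule'' without giving details, and your induction argument with Pascal's identity is exactly that.
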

This theorem can be proven by the principle of mathematical induction and the product rule.

This section will provide and prove a truncated non-integer Leibniz product rule sum that expresses the fractional derivative of the product of two differentiable functions. The cases for when the product consists of an arbitrary differentiable function and a monomial with a positive interger order and a confluent hypergeometric limiting function with a positive integer order will be proven.

\subsection{The Fractional Leibniz product rule of an arbitrary differentiable function and a polynomial with a positive integer degree}

\begin{theorem}
\label{main formula 1}
For $n \in \mathbb{N}$, $a \in \mathbb{R}$
\[ (\partial_{x})^{a} [x^{n}f(x)]=  \sum_{m=0}^{n} \binom{n}{m} (a)_{m}  f^{(a-m)}(x) x^{(n-m)}
\]
\end{theorem}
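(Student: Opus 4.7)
The plan is to prove the formula by induction on $n$, with $a$ treated as a fixed real parameter. The base case $n=0$ is immediate: the left-hand side is $(\partial_x)^a f(x) = f^{(a)}(x)$, while the right-hand side has only the $m=0$ term, namely $\binom{0}{0}(a)_0 f^{(a)}(x) x^0 = f^{(a)}(x)$.

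For the inductive step, I would factor $x^{n+1} f(x) = x \cdot (x^n f(x))$ and invoke the auxiliary identity
\[ (\partial_x)^a [x \cdot h(x)] = x (\partial_x)^a h(x) + a (\partial_x)^{a-1} h(x), \]
which can be viewed as a truncation of the generalised Leibniz rule applied to the product with the linear factor $x$: since the integer-order derivatives of $x$ vanish beyond order one, only the $k=0$ and $k=1$ terms survive in the standard fractional Leibniz expansion. Setting $h(x) = x^n f(x)$, the inductive hypothesis applies to both $(\partial_x)^a [x^n f(x)]$ and $(\partial_x)^{a-1}[x^n f(x)]$, yielding two finite sums indexed by $m = 0, \ldots, n$.

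The algebraic manipulation then proceeds as follows. Multiplying the first sum by $x$ raises its power of $x$ by one. Multiplying the second sum by $a$ and collapsing the coefficient via $a \cdot (a-1)_m = (a)_{m+1}$ produces summands of the form $\binom{n}{m} (a)_{m+1} f^{(a-m-1)}(x) x^{n-m}$. Reindexing by $m' = m+1$ shifts its range to $m' = 1, \ldots, n+1$, after which both sums share the common shape with terms in $f^{(a-m')}(x) x^{n+1-m'}$ and coefficient $(a)_{m'}$. Combining coefficients at each index via Pascal's identity $\binom{n}{m} + \binom{n}{m-1} = \binom{n+1}{m}$, together with the boundary values $\binom{n}{0} = \binom{n+1}{0}$ and $\binom{n}{n} = \binom{n+1}{n+1}$, produces precisely the claimed formula at level $n+1$.

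The main obstacle is the auxiliary identity at the start of the inductive step: this is where the behaviour of the fractional operator on a product is actually used, and it must hold for whichever concrete realisation of $(\partial_x)^a$ is in play (for instance Riemann--Liouville on a suitable function class). Once that identity is secured, the remainder of the argument is routine bookkeeping with falling factorials and binomial coefficients.
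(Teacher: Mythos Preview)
Your proof is correct but follows a genuinely different route from the paper. The paper applies the infinite fractional Leibniz expansion directly to $x^{n}f(x)$, obtaining $\sum_{m\ge 0}\binom{a}{m}(n)_{m}f^{(a-m)}(x)x^{n-m}$, and then invokes the coefficient symmetry $\binom{a}{m}(n)_{m}=\binom{n}{m}(a)_{m}$; since $\binom{n}{m}$ vanishes for $m>n$, the series truncates in one stroke. You instead induct on $n$, using only the $n=1$ instance of that truncation (your auxiliary identity $(\partial_x)^a[xh]=x(\partial_x)^a h + a(\partial_x)^{a-1}h$) and rebuilding the general case via $a(a-1)_m=(a)_{m+1}$ and Pascal's rule. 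The paper's argument is essentially a single algebraic observation once the full fractional Leibniz series is granted, and it makes the symmetry between $a$ and $n$ transparent; your inductive argument is a bit longer but has the merit of isolating the sole analytic input as a statement about multiplication by a linear factor, after which everything is finite combinatorics --- a cleaner dependency structure if one is wary of manipulating the infinite Leibniz series for a particular realisation of $(\partial_x)^a$.
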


\begin{proof}

 A simple combinatorial property concerning the binomial coefficent will be used to prove this theorem.

First, consider the $a$th derivative of $x^{n}f(x)$ and   make use of the extension of the General leibniz rule to non-integers.

\[ (\partial_{x})^{a} [x^{n}f(x)]= \sum_{m=0}^{a} \binom{a}{m} (n)_{m} f^{(a-m)}(x)  x^{n-m}
\]
\[
= 
\sum_{m=0}^{\infty} \binom{a}{m} (n)_{m} f^{(a-m)}(x)  x^{n-m}
\]

 A finite sum expression can be easily determined by noting that equation above is simply equal to 

\[ (\partial_{x})^{a} [x^{n}f(x)]= \sum_{m=0}^{\infty} \binom{n}{m} (a)_{m} f^{(a-m)}(x) x^{n-m}
\]

This can be noted by  switching the total number of elements of binomial coefficient from $a$ to $n$ of $\binom{a}{m}$ such that

\[ \binom{a}{m} (n)_{m} = \frac{(a)_{m} (n)_{m}}{m!} = \binom{n}{m} (a)_{m}
\]

Which, due to the nature of the binomial coefficient $\binom{n}{m}$, allows  truncation of the infinite sum to a finite sum with an upper index of $n$

\[ (\partial_{x})^{a} [x^{n}f(x)]=  \sum_{m=0}^{n} \binom{n}{m} (a)_{m}  f^{(a-m)}(x) x^{(n-m)}
\]
\end{proof}

Which thus concludes the proof of \ref{main formula 1}.

\subsection{The Fractional Leibniz product rule of an arbitrary differentiable function and the confluent hypergeometric limit function with a positive integer order}

Let  ${}_0F_1(n+1; x)$  be the confluent hypergeometric limiting function of $n+1$, where  $n\in \mathbb Z_{+}$, defined as

\[
{}_0F_1(n+1; x) 
 = \sum_{k=0}^{\infty}{\frac{(n)!}{(n+k)!}{\frac{x^{k}}{k!}}}
\]

\begin{theorem}
\label{main formula 2}
For $n \in \mathbb Z_{+}$, $a \in \mathbb{R} \setminus \mathbb{Z}_{<0}$$
$

\[
(\partial_{x})^{a} [{}_0F_1(n+1; x) f(x)]
= 
\sum_{m=0}^{n} \binom{n}{m} (a)_{m} \frac{\Gamma{(n+1-m)}}{\Gamma{(n+1-m+a)}} f^{(m)}(x) {}_0F_1(n-m+1+a; x)
\]
\end{theorem}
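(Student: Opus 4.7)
My plan is to apply the generalized fractional Leibniz rule so that $f$ inherits the integer-order derivatives while ${}_0F_1(n+1;x)$ absorbs the fractional slack, use a shift identity that folds each $\partial_x^{a-m}{}_0F_1(n+1;x)$ back into another ${}_0F_1$, and finally rewrite the binomial prefactor so that the sum truncates at $m=n$, paralleling the argument of Theorem \ref{main formula 1}. First I would write
\[
\partial_x^a\bigl[{}_0F_1(n+1;x)\,f(x)\bigr] \;=\; \sum_{m=0}^{\infty}\binom{a}{m}\,f^{(m)}(x)\,\partial_x^{a-m}{}_0F_1(n+1;x),
\]
using the same non-integer extension of the Leibniz rule as in the proof of Theorem \ref{main formula 1}, but with the two factors swapped so that $f$ collects the integer-index derivatives.

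Next I would establish the shift identity
\[
\partial_x^{j}\,{}_0F_1(n+1;x) \;=\; \frac{n!}{(n+j)!}\,{}_0F_1(n+j+1;x)
\]
for non-negative integer $j$ by differentiating the defining series term-by-term and re-indexing the dummy summation variable. Extending this identity to non-integer order $j = a-m$ by replacing factorials with $\Gamma$-values, in exactly the algebraic spirit that promotes $(n)_m$ to $(a)_m$ in Theorem \ref{main formula 1}, yields
\[
\partial_x^{a-m}\,{}_0F_1(n+1;x) \;=\; \frac{n!}{\Gamma(n-m+1+a)}\,{}_0F_1(n-m+1+a;x),
\]
and substituting into the Leibniz expansion produces
\[
\sum_{m=0}^{\infty}\binom{a}{m}\,\frac{n!}{\Gamma(n-m+1+a)}\,f^{(m)}(x)\,{}_0F_1(n-m+1+a;x).
\]
Finally I would invoke the combinatorial identity $\binom{a}{m}\,n! = \binom{n}{m}\,(a)_m\,\Gamma(n+1-m)$, a direct variant of $\binom{a}{m}(n)_m=\binom{n}{m}(a)_m$ used in Theorem \ref{main formula 1}; inserting it introduces the factor $\binom{n}{m}$, which vanishes for every integer $m>n$ and collapses the infinite sum to $m=0,1,\ldots,n$, matching the stated right-hand side.

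The hard part will be justifying the extension of the shift identity for ${}_0F_1$ from integer to non-integer order. A direct termwise Riemann--Liouville calculation gives $\partial_x^{a-m}{}_0F_1(n+1;x)$ with an extra $x^{-(a-m)}$ factor, so the collapse into $\tfrac{n!}{\Gamma(n-m+1+a)}\,{}_0F_1(n-m+1+a;x)$ has to be carried out within the algebraic/umbral framework of the paper, in the same sense in which the substitution $(n)_m\to(a)_m$ is taken as legitimate in the proof of Theorem \ref{main formula 1}.
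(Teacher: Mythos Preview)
Your proposal is correct and follows essentially the same route as the paper: apply the non-integer Leibniz expansion with $f$ receiving the integer-order derivatives, invoke the fractional shift identity $(\partial_x)^{a}{}_0F_1(n+1;x)=\tfrac{\Gamma(n+1)}{\Gamma(n+1+a)}{}_0F_1(n+1+a;x)$ (which the paper obtains by the same integer-pattern-then-gamma-extension argument you sketch, citing \citep{frac}), and then swap $\binom{a}{m}$ for $\binom{n}{m}$ to truncate. The only cosmetic difference is that the paper factors $(\partial_x)^{a-m}=(\partial_x)^{a}(\partial_x)^{-m}$ so as to isolate an explicit $(n)_m$ and reuse the identity $\binom{a}{m}(n)_m=\binom{n}{m}(a)_m$ verbatim from Theorem~\ref{main formula 1}, whereas you collapse these two steps and use the equivalent variant $\binom{a}{m}\,n!=\binom{n}{m}(a)_m\,\Gamma(n+1-m)$.
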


\begin{proof}

To prove this theorem, The same strategy to prove Theorem \ref{main formula 1}, and the following observation regarding a property of the confluent hypergeometric limiting function.

\textbf{\textit{Observations} }

For $n \in \mathbb Z_{+}$ , $a \in \mathbb{R}$
\[
(\partial_{x})^{a} {}_0F_1(n+1; x)
= \frac{\Gamma{(n+1)}}{\Gamma{(n+1+a)}}{}_0F_1(n+1-a; x)
\]

The proposition may be determined by the following observation:

note that:

\begin{align*}
    (\partial_{x}) {}_0F_1(n+1;x)&= \frac{{}_0F_1(n+2; x)}{(n+1)} \\
    (\partial_{x})^{2} {}_0F_1(n+1;x)&= \frac{{}_0F_1(n+3; x)}{(n+1)(n+2)}\\
    (\partial_{x})^{3}{}_0F_1(n+1;x)&= \frac{{}_0F_1(n+4; x)}{(n+1)(n+2)(n+3)} \\
   (\partial_{x})^{4}{}_0F_1(n+1;x)&= \frac{{}_0F_1(n+5; x)}{(n+1)(n+2)(n+3)(n+4)}\\
   \end{align*}
And in general, for any arbitary value $a$, it can be obtained that
   
 \begin{align*}  
(\partial_{x})^{a} {}_0F_1(n+1;x)&= \frac{ (n)!}{(n+a)!} {}_0F_1(n+1+a; x)\\
\end{align*}

Finally, $a$ is treated as a generic real number is used to express the  factorial as the gamma function to obtain the more general identity \citep{frac}:

\begin{equation}
    (\partial_{x})^{a} {}_0F_1(n+1; x)
= \frac{\Gamma{(n+1)}}{\Gamma{(n+1+a)}}{}_0F_1(n+1+a; x)
\end{equation}

\textbf{\textit{Proof of Theorem 3.2} }

We can now proceed with the proof of theorem \ref{main formula 2}.

The General leibiniz product rule for 
 ${}_0F_1(n+1; x) f(x)$ and its extension to non-integers is considered:

\[ (\partial_{x})^{a} [{}_0F_1(n+1; x)f(x)]=   \sum_{m=0}^{\infty} \binom{a}{m}  f^{(m)}(x)   (\partial_{x})^{a-m}[{}_0F_1(n+1; x)]
\]

and making use of equation (1), we can express a closed form expression of $(\partial_{x})^{a-m}{}_0F_1(n+1; x)$ to obtain:

\[
\sum_{m=0}^{\infty} \binom{a}{m}  f^{(m)}(x)   (n)_{m}(\partial_{x})^{a}[{}_0F_1(n+1-m; x)]
\]
\[
= \sum_{m=0}^{\infty} \binom{a}{m}  f^{(m)}(x)   (n)_{m} \frac{\Gamma{(n+1-m)}}{\Gamma{(n+1-m+a)}}{}_0F_1(n+1-m+a; x)
\]

Which, interchanging the total number elements of the binomial coefficient $\binom{a}{m}$ from $a$ to $n$ results in:

\[
\sum_{m=0}^{\infty} \binom{n}{m} (a)_{m} \frac{\Gamma{(n+1-m)}}{\Gamma{(n+1-m+a)}} f^{(m)}(x) {}_0F_1(n-m+1+a; x)
\]

Finally, truncating the infinite sum to a finite upper-index $n$, results in theorem \ref{main formula 2}. 

\end{proof}
This concludes our proof.

\section{An Umbral extension of the Truncated General Leibniz Product rule }

In this section, we provide a generalization of the truncated general Leibniz product rule for any linear operator and the product of an arbitrary differentiable function and any Sheffer sequence, making use of the theory of Umbral Calculus. We make use of the generalization to provide analogs of theorem \ref{main formula 1} for some Sheffer sequences, and provide proofs for expressions of the $m$th derivative of certain functions.

\vspace{10pt}
Let $O(f(t))$ indicates the smallest interger $n$ of the non-zero power series $f(t)$, such that the coefficient of $t^{n}$ does not vanish.
Let $O(k(t))=1$ and $O(g(t))=0$, then it can be shown that all Sheffer sequences can be characterized by the following generating function (See: \citep{Roman theory}, \citep{sheffer roman}, \citep{sheffer roman 2}) :

\[
   \frac{1}{g(k^{-1}(t))} e^{xk^{-1}(t)}  = \sum_{n=0}^{\infty} \frac{s_{n}(x) t^n}{n!}
\]  

Where $k^{-1}(t)$ is the compositional inverse of $k(t)$ such that $k(k^{-1}(t)) = k^{-1}(k(t)) = t$, and $s_{n}(x)$ is a polynomial sequence.

In particular, $s_{n}(x)$ is called the Sheffer sequence for $(g(t), k(t))$, if, for some linear operator characterized by $k(t)$ in the form $k(\partial_{x})$ the following relation holds:

\[
k(\partial_{x}) [s_{n}(x)] = n s_{n-1}(x) 
\]

The Generalized theorem will now be presented.
Let $s_{n}(x)$ be a sequence Sheffer to  $(g(t), k(t))$. Define a linear operator characterized by the power series $v(t)$ in the form $v(\partial_{x})$, and let $j(x)$ be an arbitrary function well defined for the operator $v(\partial_{x})$. Then the  generalized Leibniz product theorem is as follows:

\begin{theorem}
\label{Generalized theorem 1}
For $r=0, n \in \mathbb Z_{+}$
\[
    v(\partial_{x}) [s_{n}(x) j(x)] =
    \sum_{m=0}^{n} \binom{n}{m} s_{n-m}(x) (\partial_{r})^{m}[v(\partial_{x}+ k^{-1}(r)) j(x)]
\]

\end{theorem}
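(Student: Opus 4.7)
The plan is to establish the identity via a generating function argument. I would multiply both sides by $t^n/n!$, sum over $n \geq 0$, and verify that the two resulting series in $t$ coincide; then the theorem follows by extracting the coefficient of $t^n/n!$.

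For the left-hand side, I would pull the operator $v(\partial_x)$ through the sum (it acts only in $x$) and invoke the defining generating function of the Sheffer sequence to obtain
\[
\sum_{n=0}^{\infty}\frac{t^n}{n!}\,v(\partial_x)[s_n(x) j(x)] = v(\partial_x)\!\left[\frac{e^{xk^{-1}(t)}}{g(k^{-1}(t))}\,j(x)\right].
\]
Since $g(k^{-1}(t))^{-1}$ does not involve $x$, it commutes with $v(\partial_x)$, and the crucial step is the operational shift identity
\[
v(\partial_x)\bigl[e^{cx} h(x)\bigr] = e^{cx}\,v(\partial_x + c)\,h(x),
\]
applied with $c = k^{-1}(t)$. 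This identity is obtained by writing $v$ as a formal power series and iterating $\partial_x^n[e^{cx} h] = e^{cx}(\partial_x+c)^n h$ term by term. The LHS generating function thus reduces to $\frac{e^{xk^{-1}(t)}}{g(k^{-1}(t))}\cdot v(\partial_x + k^{-1}(t)) j(x)$.

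For the right-hand side, I would set $F(r) := v(\partial_x + k^{-1}(r)) j(x)$, so the claimed expression is $\sum_{m=0}^{n} \binom{n}{m} s_{n-m}(x) F^{(m)}(0)$, a binomial convolution. By the Cauchy product rule,
\[
\sum_{n=0}^{\infty}\frac{t^n}{n!}\sum_{m=0}^{n}\binom{n}{m} s_{n-m}(x) F^{(m)}(0) = \left(\sum_{m=0}^{\infty}\frac{t^m}{m!}F^{(m)}(0)\right)\!\left(\sum_{k=0}^{\infty}\frac{t^k}{k!}s_k(x)\right).
\]
The second factor is precisely the Sheffer generating function $e^{xk^{-1}(t)}/g(k^{-1}(t))$, while the first factor is the Taylor expansion of $F$ about $0$ evaluated at $r = t$, namely $F(t) = v(\partial_x + k^{-1}(t)) j(x)$. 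The two generating functions therefore agree, and matching coefficients of $t^n/n!$ yields the stated identity.

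The principal obstacle, in my view, is justifying the formal manipulations rigorously: namely, that $v(\partial_x + k^{-1}(r)) j(x)$ is well defined as a power series in $r$ whose $m$th Taylor coefficient at $r=0$ equals $(\partial_r)^m[v(\partial_x + k^{-1}(r)) j(x)]|_{r=0}/m!$, and that the shift identity used on the LHS holds at the level of the formal series in $t$. Both are standard consequences of the umbral-calculus framework given the hypothesis that $j(x)$ is well defined for the operator $v(\partial_x)$, so this reduces to a careful bookkeeping step rather than a genuinely new difficulty.
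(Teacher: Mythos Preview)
Your argument is correct and rests on exactly the same two ingredients as the paper's proof: the Sheffer generating function and the exponential shift identity $v(\partial_x)[e^{cx}h(x)]=e^{cx}\,v(\partial_x+c)\,h(x)$. The only difference is organizational. The paper works at a fixed $n$: it writes $s_n(x)=(\partial_r)^n\bigl[g(k^{-1}(r))^{-1}e^{xk^{-1}(r)}\bigr]\big|_{r=0}$, commutes $v(\partial_x)$ past $(\partial_r)^n$, applies the shift identity, and then uses the ordinary Leibniz rule in the variable $r$ to split the $n$th derivative of the product $\bigl[g(k^{-1}(r))^{-1}e^{xk^{-1}(r)}\bigr]\cdot\bigl[v(\partial_x+k^{-1}(r))j(x)\bigr]$. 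You instead sum over $n$ and recognize the right-hand side as a Cauchy product of exponential generating functions. These are dual viewpoints---Leibniz's rule for $(\partial_r)^n$ of a product is precisely the coefficient-level statement of the Cauchy product for exponential generating functions---so neither approach buys anything the other does not; your version is perhaps slightly cleaner in that it avoids tracking the auxiliary variable $r$ until the very end.
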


Where the operator $\partial_{x}$ is treated as a constant when $\partial_{r}$ is applied.

\begin{proof}
    
    First, consider
    
\[
v(\partial_{x}) [s_{n}(x) j(x)]
\] 

Which, by making use of the umbral  expression of $s_{n}(x)$ , it can be obtained

\[
v(\partial_{x}) [s_{n}(x) j(x)]
= v(\partial_{x}) (\partial_{r})^{n} \frac{1}{g(k^{-1}(r))} e^{xk^{-1}(r)}  j(x) =(\partial_{r})^{n} v(\partial_{x})\frac{1}{g(k^{-1}(r))} e^{xk^{-1}(r)}  j(x) \] 

Now, assume that the operator $v(\partial_{x})$ can be characterized by its power series expression:

\[
v(\partial_{x}) 
= \sum_{m=0}^{\infty}{A_{m} (\partial_{x})^{m}}
\]

Making use of the exponential shift theorem, where $q$ and $a$ is an arbitrary number:

\[ (\partial_{x})^{q} e^{ax} g(x) = e^{ax} (\partial_{x} + a)^{q} g(x)
\] 

It can be obtained that:

\[ 
 (\partial_{r})^{n} v(\partial_{x})\frac{1}{g(k^{-1}(r))} e^{xk^{-1}(r)}  j(x) 
 \] 
 \[
 =  (\partial_{r})^{n} \frac{1}{g(k^{-1}(r))} 
 \sum_{m=0}^{\infty}{A_{m} (\partial_{x})^{m}} \frac{1}{g(k^{-1}(r))} e^{xk^{-1}(r)}  j(x)\] 
\[ 
 =
(\partial_{r})^{n} \frac{1}{g(k^{-1}(r))} 
 \sum_{m=0}^{\infty}{A_{m} e^{xk^{-1}(r)} (\partial_{x} + k^{-1}(r))^{m} j(x)}
\] 
 \[
=
(\partial_{r})^{n} \frac{1}{g(k^{-1}(r))} e^{xk^{-1}(r)}
 \sum_{m=0}^{\infty}{A_{m}  (\partial_{x} + k^{-1}(r))^{m} j(x)}
\] 
\[ 
 =
 (\partial_{r})^{n} \frac{1}{g(k^{-1}(r))} e^{xk^{-1}(r)} v(\partial_{x}+k^{-1}(r))j(x)
\]

As we are differentiating with respect to $r$, Theorem \ref{General Leibniz rule of differentiation} can be applied with respect to $r$ to obtain:

\[ 
 (\partial_{r})^{n} \frac{1}{g(k^{-1}(r))} e^{xk^{-1}(r)} v(\partial_{x}+k^{-1}(r))j(x)
 \]
 \[ 
 = \sum_{m=0}^{n} \binom{n}{m} (\partial_{r})^{n-m} [\frac{1}{g(k^{-1}(r))} e^{xk^{-1}(r)}](\partial_{r})^{m}[v(\partial_{x}+ k^{-1}(r)) j(x)]\]

Where $r=0$. This is equal to  Theorem \ref{Generalized theorem 1}, thus concluding the proof.

\end{proof}

As a Corollary, a special case can be obtained, for the product of an arbitrary differentiable function and the standard polynomial:

\begin{corollary}

\label{main formula 1 generalization}

\[
k(\partial_{x}) [x^{n} f(x)] = \sum_{m=0}^{n} {\binom{n}{m} x^{n-m}k^{(m)}(\partial_{x})[f(x)]}
\] 

\end{corollary}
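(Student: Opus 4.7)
The plan is to specialize Theorem \ref{Generalized theorem 1} to the monomial sequence $s_n(x) = x^n$. To avoid the notational clash between the operator symbol $k$ appearing in the corollary and the symbol $k$ used for the Sheffer identifier in Theorem \ref{Generalized theorem 1}, I will temporarily rename the latter as $K$. The first step is to recognize that $x^n$ is Sheffer for $(g(t), K(t)) = (1, t)$, since the generating-function identity
\[
\frac{1}{g(K^{-1}(t))} e^{xK^{-1}(t)} = e^{xt} = \sum_{n=0}^{\infty} \frac{x^n t^n}{n!}
\]
is satisfied with $g \equiv 1$ and $K^{-1}(t) = t$.

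Next, I would apply Theorem \ref{Generalized theorem 1} with $s_n(x) = x^n$, with the outer operator $v(\partial_x)$ taken to be the operator $k(\partial_x)$ of the corollary, and with $j(x) = f(x)$. Since $K^{-1}(r) = r$ and $s_{n-m}(x) = x^{n-m}$, the conclusion of the theorem specializes to
\[
k(\partial_x)[x^n f(x)] = \sum_{m=0}^{n} \binom{n}{m} x^{n-m} (\partial_r)^m \bigl[k(\partial_x + r) f(x)\bigr] \Big|_{r=0}.
\]

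The remaining task is to identify the operator $(\partial_r)^m [k(\partial_x + r)] \big|_{r=0}$ with $k^{(m)}(\partial_x)$. Writing $k$ as a formal power series $k(t) = \sum_{\ell \geq 0} A_\ell t^\ell$ and treating $\partial_x$ as a constant (as stipulated in Theorem \ref{Generalized theorem 1}), I would differentiate term by term and substitute $r = 0$. The resulting operator is $\sum_{\ell \geq m} A_\ell \, \ell(\ell-1)\cdots(\ell-m+1)\,(\partial_x)^{\ell - m}$, which coincides with $k^{(m)}(\partial_x)$ by the definition of the $m$th derivative of a power series. Substituting this identification into the previous display yields the corollary.

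The main obstacle is essentially notational: the symbol $k$ plays two distinct roles once the specialization is made, and one must be careful to distinguish the Sheffer identifier $K(t) = t$ (which makes the residual factors $1/g(K^{-1}(r))$ and $e^{xK^{-1}(r)}$ collapse to $1$ and $e^{xr}$ respectively, and which in turn causes all but the $x^{n-m}$ term to vanish at $r=0$) from the operator symbol $k$ of the corollary. Once this bookkeeping is done, the proof reduces to a direct substitution into Theorem \ref{Generalized theorem 1} together with the routine power-series computation of $(\partial_r)^m k(\partial_x + r)\big|_{r=0}$.
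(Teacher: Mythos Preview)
Your proof is correct, but it follows a different route from the paper. The paper does \emph{not} derive the corollary from Theorem~\ref{Generalized theorem 1}; instead it expands $k(t)=\sum_m C_m t^m/m!$ as a power series, applies the earlier truncated Leibniz rule (Theorem~\ref{main formula 1}) to each $(\partial_x)^m[x^n f(x)]$, interchanges the two sums, and then recognises the inner sum as $k^{(u)}(\partial_x)f(x)$. Your argument instead specialises Theorem~\ref{Generalized theorem 1} to the Sheffer pair $(1,t)$, so that $s_n(x)=x^n$ and $K^{-1}(r)=r$, and then identifies $(\partial_r)^m k(\partial_x+r)\big|_{r=0}$ with $k^{(m)}(\partial_x)$. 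Your approach makes the ``corollary'' status transparent---it is literally a one-line specialisation of the umbral theorem---whereas the paper's approach is more elementary, relying only on Theorem~\ref{main formula 1} and a direct power-series rearrangement, and thus shows that the identity does not actually require the full umbral machinery.
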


Where, similar to Theorem \ref{Generalized theorem 1}, $k^{(m)}(\partial_{x})$ refers to the $m$th derivative of the function $k(t)$, then letting variable $t$ be the operator $\partial_{x}$.

\begin{proof}
We make use of Theorem \ref{main formula 1}, and assume that the power series $k(t)$ is in the form

\[
k(t) = \sum_{m=0}^{\infty}{\frac{C_{m}t^{m}}{{m!}}}
\] 

So that
\[
k(\partial_{x}) (x^{n} f(x)) =
\sum_{m=0}^{\infty}{\frac{C_{m}}{{m!}}}(\partial_{x})^{m} x^{n} f(x) 
= \sum_{m=0}^{\infty}{\frac{C_{m}}{{m!}} \sum_{u=0}^{n}{\binom{n}{u}(m)_{u} x^{n-u} f^{(m-u)}(x)}}
\] 
\[
=\sum_{u=0}^{n}{\binom{n}{u} x^{n-u} } \sum_{m=0}^{\infty}{\frac{C_{m} f^{(m-u)}(x) (m)_{u}}{{m!}}} = \sum_{u=0}^{n}{\binom{n}{u} x^{n-u} } \sum_{m=0}^{\infty}{\frac{C_{m} f^{(m-u)}(x) }{{(m-u)!}}}  \] 

Making note of the following property regarding $k(t)$:

\[
k^{(v)}(t) = \sum_{m=0}^{\infty}{\frac{C_{m}t^{(m-v)}}{{(m-v)!}}}
\] 

We can determine that

\[
\sum_{u=0}^{n}{\binom{n}{u} x^{n-u} } \sum_{m=0}^{\infty}{\frac{C_{m} f^{(m-u)}(x) }{{(m-u)!}}} = \sum_{u=0}^{n}{\binom{n}{u} x^{n-u} } k^{(u)}(\partial_{x}) f(x) \] 

Thus concluding our proof.

\end{proof}

\section{Applications of the Umbral extension of the General Leibniz Product rule for Some Sheffer Sequences}

In this section, we make use of Theorem \ref{Generalized theorem 1} to obtain the analogs of Theorem \ref{main formula 1} for certain Sheffer sequences. An abitrary Apell sequence, the falling factorial, the rising factorial, and the associated Laguerre polynomials are considered.

\subsection{ The Fractional derivative of the product of an Apell sequence and an arbitrary differentiable function}

Due to Theorem \ref{Generalized theorem 1}, it easily follows, for $Ap_{n}(x)$ is any Apell sequence, that:
\begin{theorem}

\label{example 1}
 For $n \in \mathbb{N}$, $a 
    \in \mathbb{R} $
\[ 
(\partial_{x})^{a} [Ap_{n}(x) f(x)]=  \sum_{m=0}^{n} \binom{n}{m}  Ap_{n-m}(x) (a)_{m}(\partial_{x})^{a-m}[f(x)]
\]
\end{theorem}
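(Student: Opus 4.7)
The plan is to derive Theorem \ref{example 1} as a direct specialization of the umbral generalization in Theorem \ref{Generalized theorem 1}. Recall that an Appell sequence is, by definition, a Sheffer sequence for the pair $(g(t), t)$, that is, one in which $k(t) = t$. Consequently $k^{-1}(t) = t$, and the lowering operator $k(\partial_x)$ reduces to ordinary differentiation. To match the left-hand side of Theorem \ref{example 1}, the natural choice is $v(t) = t^{a}$, so that $v(\partial_x) = (\partial_x)^{a}$ is the fractional derivative of order $a$, together with $j(x) = f(x)$.

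With these choices Theorem \ref{Generalized theorem 1} specializes, after evaluation at $r=0$ as prescribed there, to
\[
(\partial_x)^{a}[Ap_n(x) f(x)] = \sum_{m=0}^{n} \binom{n}{m} Ap_{n-m}(x) (\partial_r)^{m}\bigl[(\partial_x + r)^{a} f(x)\bigr]\big|_{r=0},
\]
so the remaining task is to evaluate the inner $r$-derivatives. Treating $\partial_x$ as a constant, as the theorem prescribes, I would apply the power rule to obtain $(\partial_r)^{m}(\partial_x + r)^{a} = (a)_m (\partial_x + r)^{a-m}$, where $(a)_m = a(a-1)\cdots(a-m+1)$ is the falling factorial used throughout the paper. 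Setting $r = 0$ then contributes a factor of $(a)_m (\partial_x)^{a-m} f(x)$, and substituting back yields exactly the claimed identity.

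The only subtle point, which I expect to be the main obstacle, is justifying the manipulation of $(\partial_x + r)^{a}$ for non-integer $a$ as an operator-valued expression in $r$. This can be handled either by invoking the umbral framework already underlying Theorem \ref{Generalized theorem 1}, or more explicitly by expanding through the generalized binomial series
\[
(\partial_x + r)^{a} = \sum_{k=0}^{\infty} \binom{a}{k} (\partial_x)^{a-k} r^{k},
\]
from which termwise differentiation in $r$ followed by evaluation at $r=0$ recovers precisely the coefficient $(a)_m (\partial_x)^{a-m}$. Beyond this verification, the derivation is a routine specialization and no substantive new idea is required.
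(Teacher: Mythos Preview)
Your proposal is correct and follows essentially the same route as the paper: specialize Theorem \ref{Generalized theorem 1} to the Appell case $k(t)=t$ (so $k^{-1}(r)=r$) with $v(t)=t^{a}$, compute $(\partial_r)^{m}(\partial_x+r)^{a}=(a)_m(\partial_x+r)^{a-m}$, and set $r=0$. The paper's proof cites Corollary \ref{main formula 1 generalization} by label, but the displayed computation it carries out is exactly this specialization of Theorem \ref{Generalized theorem 1}, matching your argument step for step.
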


Where $Ap_{n}(x)$ is characterized by the generating function:

\[ 
   \frac{1}{g(t)} e^{xt}  = \sum_{n=0}^{\infty} \frac{Ap_{n}(x) t^n}{n!}
\]  

and has the  Sheffer relation:

\[ 
(\partial_{x}) Ap_{n}(x) =n Ap_{n-1}(x)
\]

\begin{proof}
The proof is rather straightforward. As an Apell sequence is Sheffer to $(g(t), t)$, using \ref{main formula 1 generalization}, it can be obtained that:

\[
 (\partial_{x})^{a} [Ap_{n}(x)f(x)] =  \sum_{m=0}^{n} \binom{n}{m}  Ap_{n-m}(x) (a)_{m} (\partial_{x} +r)^{a-m} [f(x)]
\]
which, setting $r=0$, allows us Theorem \ref{example 1} to be obtained.

\end{proof}

Due to Theorem \ref{example 1}, the following Corollary can be obtained, which expresses the fractional order of any Apell sequence.

\begin{corollary}

\label{fractional apell}
For $n \in 
\mathbb{N}$, $a 
    \in \mathbb{R},  n-a > 0$

\[
Ap_{n-a}(x) = 
\sum_{m=0}^{n} \binom{n}{m} \frac{(a)_{m}}{n!} 
\frac{\Gamma{(n-a+1})}{\Gamma(m-a+1)}   Ap_{n-m}(x) x^{m-a} 
\]

\end{corollary}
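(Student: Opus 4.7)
The plan is to derive the corollary as a direct specialisation of Theorem \ref{example 1}, choosing $f(x)=1$ so that the left-hand side collapses to the fractional derivative of a single Apell sequence, and then solving algebraically for $Ap_{n-a}(x)$.

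First I would set $f(x)=1$ in Theorem \ref{example 1}. The right-hand side then requires only the fractional derivative of the constant function, which by the standard Riemann--Liouville formula for monomials (applied to $x^{0}$) is
\[
(\partial_{x})^{a-m}[1] = \frac{x^{m-a}}{\Gamma(m-a+1)}.
\]
Substituting this yields
\[
(\partial_{x})^{a} Ap_{n}(x) = \sum_{m=0}^{n} \binom{n}{m} (a)_{m}\, Ap_{n-m}(x)\, \frac{x^{m-a}}{\Gamma(m-a+1)}.
\]

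Next I would evaluate the left-hand side using the defining Sheffer relation $(\partial_{x}) Ap_{n}(x) = n\, Ap_{n-1}(x)$. Iterating this relation gives $(\partial_{x})^{k} Ap_{n}(x) = \tfrac{n!}{(n-k)!} Ap_{n-k}(x)$ for every $k \in \mathbb{N}$, and extending the factorials to the Gamma function on both the pre-factor and the index produces the natural fractional analogue
\[
(\partial_{x})^{a} Ap_{n}(x) = \frac{n!}{\Gamma(n-a+1)}\, Ap_{n-a}(x),
\]
which is well defined under the stated hypothesis $n-a > 0$. Equating the two expressions and dividing through by $\tfrac{n!}{\Gamma(n-a+1)}$ gives precisely the formula in Corollary \ref{fractional apell}.

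The step I expect to require the most care is the passage from the integer-order iteration of the Sheffer relation to its fractional-order counterpart; this is essentially what \emph{defines} $Ap_{n-a}(x)$ when $a \notin \mathbb{N}$, and the condition $n-a>0$ is exactly what is needed to keep $\Gamma(n-a+1)$ finite and the index of the Apell family on the right-hand side in a well-posed region. Once this fractional Sheffer identity is accepted, the remainder of the proof is a one-line algebraic rearrangement of the specialisation of Theorem \ref{example 1}.
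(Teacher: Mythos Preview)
Your proposal is correct and mirrors the paper's own proof almost step for step: the paper also specialises Theorem~\ref{example 1} at $f(x)=x^{0}$, evaluates $(\partial_{x})^{a-m}[x^{0}]$ as $x^{m-a}/\Gamma(m-a+1)$, invokes the fractional Sheffer relation $(\partial_{x})^{a}Ap_{n}(x)=\tfrac{n!}{\Gamma(n-a+1)}Ap_{n-a}(x)$, and then multiplies through by $\tfrac{\Gamma(n-a+1)}{n!}$. Your remark that the fractional Sheffer identity is effectively the definition of $Ap_{n-a}(x)$ is exactly the point the paper takes for granted.
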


\begin{proof}
To prove this, Theorem \ref{example 1} is used, and the condition $f(x)= x^{0}$ is set. We then
make use of the Sheffer property of Apell sequences, and obtain an expression for the fractional order of an Apell sequence by computing its $a$th derivative and using the gamma function to express its fractional derivative.

\[
(\partial_{x})^{a} [Ap_{n}(x) x^{0}]=
\frac{n!}{\Gamma{(n-a+1})}Ap_{n-a}(x) 
=
\sum_{m=0}^{n} \binom{n}{m} (a)_{m} Ap_{n-m}(x) 
\frac{x^{m-a}}{\Gamma(m-a+1)} \]

Which, multiplying both sides by $\frac{\Gamma{(n-a+1})}{n!}$, allows us to obtain Corollary \ref{fractional apell}, thereby concluding the proof.
\end{proof}

As Varona and al.  establishes in  \citep{apell}, the special values of certain analytical  functions  are expressible as Apell Sequences. Theorem \ref{example 1}   may be of particular interest for evaluation of the fractional derivative of those analytical functions expressible as Apell sequences.

\subsection{The Fractional derivative of product of the Falling factorial and an arbitrary differentiable function }

Utilizing \ref{Generalized theorem 1}, an analog of Theorem \ref{main formula 1} with the falling factorial instead of the standard polynomial can be obtained.

\begin{theorem}
\label{falling lebniz}
    For $n \in \mathbb{N}$, $a 
    \in \mathbb{R}$
    
\[ 
(\partial_{x})^{a}[(x)_{n} f(x)]
=
\sum_{m=0}^{n}\sum_{u=0}^{m}{\binom{n}{m} S_{1}(m,u) (x)_{n-m}  (a)_{u} (\partial_{x})^{a-u}[f(x)] }
\]

\end{theorem}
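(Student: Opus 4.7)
The plan is to specialize Theorem \ref{Generalized theorem 1} to the falling factorial, which is the Sheffer sequence associated with the pair $(g(t), k(t)) = (1, e^t - 1)$. This is the standard umbral identification: the generating function
\[
(1+t)^x \;=\; \sum_{n=0}^{\infty} \frac{(x)_n\, t^n}{n!}
\]
fits the template of Theorem \ref{Generalized theorem 1} with $g(t)=1$ and $k^{-1}(t) = \log(1+t)$. First I would verify this identification and note the Sheffer relation $\Delta (x)_n = n (x)_{n-1}$ (where $\Delta = e^{\partial_x}-1 = k(\partial_x)$) as a consistency check.

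Next I would set $v(\partial_x) = (\partial_x)^a$ and $j(x) = f(x)$ in Theorem \ref{Generalized theorem 1}. Since $g \equiv 1$ and $k^{-1}(r) = \log(1+r)$, the generalized formula collapses to
\[
(\partial_x)^a[(x)_n f(x)] \;=\; \sum_{m=0}^n \binom{n}{m} (x)_{n-m} \, (\partial_r)^m\Bigl[\bigl(\partial_x + \log(1+r)\bigr)^a f(x)\Bigr]_{r=0}.
\]
Treating $\partial_x$ as a constant with respect to $\partial_r$, I would expand the operator $(\partial_x + \log(1+r))^a$ as a binomial series in powers of $\log(1+r)$:
\[
\bigl(\partial_x + \log(1+r)\bigr)^a \;=\; \sum_{u=0}^{\infty} \binom{a}{u} (\partial_x)^{a-u} \bigl(\log(1+r)\bigr)^u.
\]

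The key ingredient is then the classical generating-function identity for the (signed) Stirling numbers of the first kind,
\[
\frac{\bigl(\log(1+r)\bigr)^u}{u!} \;=\; \sum_{m \geq u} S_1(m,u)\, \frac{r^m}{m!},
\]
which immediately gives $(\partial_r)^m\bigl(\log(1+r)\bigr)^u\bigr|_{r=0} = u!\, S_1(m,u)$ for $u \leq m$ (and zero otherwise). Combining this with the identity $\binom{a}{u}u! = (a)_u$ and truncating the inner sum at $u=m$, the inner $\partial_r$-expression evaluates to $\sum_{u=0}^m S_1(m,u)(a)_u (\partial_x)^{a-u} f(x)$, which after substitution gives exactly the claimed double sum.

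I expect the main obstacle to be a conceptual rather than computational one: making sense of $(\partial_x + \log(1+r))^a$ when $a$ is non-integer, since the binomial series above is formal in the variable $\log(1+r)$ with the differential operator $\partial_x$ carried symbolically. The rigorous justification parallels the one tacitly used throughout Section 2, namely treating the fractional power of the operator as a generalized series and relying on the Stirling generating function to localize the evaluation at $r=0$ to a finite sum, so no convergence issue arises once the inner sum is truncated.
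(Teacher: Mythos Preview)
Your proposal is correct and follows the same overall strategy as the paper: specialize Theorem~\ref{Generalized theorem 1} to the falling factorial via its generating function $(1+t)^x=e^{x\log(1+t)}$, reducing the problem to computing $(\partial_r)^m\bigl[(\partial_x+\log(1+r))^a f(x)\bigr]\big|_{r=0}$, and then evaluate. The one substantive difference is how that inner $r$-derivative is handled. The paper isolates a separate Lemma (Lemma~\ref{Lemma ln}, a Fa\`a di Bruno--type identity for $(\partial_x)^m f(\ln x)$ involving $S_1$) and applies it with $f(z)=(y+z)^a$ to get $(\partial_r)^m[(y+\ln(1+r))^a]=\sum_{u} S_1(m,u)(a)_u (y+\ln(1+r))^{a-u}(1+r)^{-m}$, then sets $r=0$. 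You instead binomially expand $(\partial_x+\log(1+r))^a$ in powers of $\log(1+r)$ and invoke the exponential generating function $\frac{(\log(1+r))^u}{u!}=\sum_{m\ge u}S_1(m,u)\frac{r^m}{m!}$ directly, which yields the same coefficients $u!\,S_1(m,u)$ after differentiation. Your route is more elementary and avoids the need to prove Lemma~\ref{Lemma ln} separately; the paper's route has the advantage of packaging the derivative of $f(\ln x)$ as a reusable lemma (which it later reuses for the iterated-logarithm generalization in \S4.2.1). Both are valid, and the formal caveat you raise about the fractional binomial expansion is exactly parallel to the one implicit in the paper's use of Lemma~\ref{Lemma ln} with $f(z)=(y+z)^a$.
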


To prove  
Theorem \ref{falling lebniz}, we make use of the following Lemma regarding the $n$th derivative of $f(\ln{(x)})$:

\begin{lemma}
\label{Lemma ln}
for $n \in \mathbb{N}$,
\[ (\partial_{x})^{n}
f(\ln{(x)})
= \sum_{m=0}^{n} S_{1}(n,m) f^{(m)}(\ln{(x)}){(x)^{-n}}
\] 
\end{lemma}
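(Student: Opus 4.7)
The plan is to prove Lemma \ref{Lemma ln} by induction on $n$. For the base case $n=1$, the chain rule gives $\partial_x f(\ln x) = f'(\ln x)\, x^{-1}$, which agrees with the right-hand side since $S_{1}(1,1)=1$ is the only relevant coefficient.

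For the inductive step, assume the identity for some $n \ge 1$ and apply $\partial_x$ term-by-term to the right-hand side. Each summand $S_{1}(n,m)\, f^{(m)}(\ln x)\, x^{-n}$ yields, by the chain rule and the power rule, the two contributions $S_{1}(n,m)\, f^{(m+1)}(\ln x)\, x^{-n-1}$ and $-n\, S_{1}(n,m)\, f^{(m)}(\ln x)\, x^{-n-1}$. After reindexing the first inner sum via $m \mapsto m-1$ and collecting coefficients, the coefficient of $f^{(m)}(\ln x)\, x^{-n-1}$ becomes $S_{1}(n,m-1) - n\, S_{1}(n,m)$.

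The crux of the argument is then recognizing this expression as the standard recurrence for the signed Stirling numbers of the first kind, namely $S_{1}(n+1,m) = S_{1}(n,m-1) - n\, S_{1}(n,m)$, which follows from the factorization $(x)_{n+1} = (x-n)(x)_{n}$ expanded in powers of $x$. This closes the induction and yields the lemma.

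An alternative and perhaps slicker route is via the Euler operator $\theta := x\,\partial_{x}$: setting $g(x) := f(\ln x)$, a quick chain-rule check gives $\theta g = f'(\ln x)$ and hence by iteration $\theta^{m} g = f^{(m)}(\ln x)$. Combined with the classical operator identity $x^{n}\,\partial_{x}^{n} = (\theta)_{n} = \sum_{m=0}^{n} S_{1}(n,m)\, \theta^{m}$ (which is itself the expansion of the falling factorial in powers, evaluated on the operator $\theta$), the lemma follows immediately upon dividing by $x^{n}$. Either way, the main obstacle is purely bookkeeping — fixing the sign convention for $S_{1}$ and checking that the reindexing lines up with the Stirling recurrence — since no analytic subtleties arise for $x > 0$ where $\ln x$ and all derivatives are smooth.
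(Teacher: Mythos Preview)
Your proof is correct. Both the induction argument and the Euler-operator argument are valid and standard; the paper itself acknowledges in passing that ``the principle of mathematical induction is enough to prove the Lemma.'' However, the paper deliberately chooses a different route.

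The paper's proof is an operational one built on the machinery developed earlier in the article. It considers the expression $f(\partial_{a})\,(\partial_{x})^{a}[x^{n}g(x)]$, where $f(\partial_{a})$ is a linear operator acting on the exponent $a$ of $(\partial_{x})^{a}$, and observes that applying $f(\partial_{a})$ to an exponential in $a$ produces $f(\ln(\cdot))$. Expanding $(\partial_{x})^{a}[x^{n}g(x)]$ via Theorem~\ref{main formula 1}, writing the resulting falling factorials $(a)_{m}$ as $\sum_{v} S_{1}(m,v)\,a^{v}$, and then applying Corollary~\ref{main formula 1 generalization} to evaluate $f(\partial_{a})[a^{v}\,(\partial_{x})^{a-m}g(x)]$, one obtains (after setting $a=0$) a triple-sum expression for $f(\ln\partial_{x})[x^{n}g(x)]$. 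A second expression for the same quantity comes directly from Theorem~\ref{Generalized theorem 1}. Comparing coefficients of $\binom{n}{m}x^{n-m}$ in the two expressions yields an operator identity which, upon replacing $\partial_{x}$ by a scalar variable, is precisely Lemma~\ref{Lemma ln}.

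Your approach is far more elementary and self-contained: it needs only the chain rule and the Stirling recurrence (or, in the Euler-operator variant, the single identity $x^{n}\partial_{x}^{n}=(\theta)_{n}$). The paper's approach, by contrast, is circuitous but serves a rhetorical purpose: it exhibits Lemma~\ref{Lemma ln} as a consequence of the paper's own Theorems~\ref{main formula 1}, \ref{Generalized theorem 1} and Corollary~\ref{main formula 1 generalization}, reinforcing the utility of that umbral framework. If one only wants the lemma, your argument is cleaner; if one wants to showcase the paper's toolkit, the paper's argument is the natural choice.
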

The reader may note that the principle of mathematical induction is enough to prove the Lemma. For the purposes of this paper however,  a direct application of Theorem \ref{main formula 1} and Corollary \ref{main formula 1 generalization} to provide a will be used to provide a combinatorial proof of the Lemma. 
\begin{proof}
    First,  consider the following equation:

\[ 
f(\partial_{a}) (\partial_{x})^{a} [x^{n} g(x)] 
\] 
Where $f(\partial_{a})$ can be seen as a linear operator that acts on $a$ on the  operator $(\partial_{x})^{a}$. It can be easily seen that the application of the linear operator $f(\partial_{a})$ to any exponential function $b^{a}$ results in $f (\partial_{a})[b^{a}]  = b^{a}  f(\ln{b}) $, where $b$ is treated as a constant, and $f$ is the function characterizing the linear operator $f(\partial_{a})$.

This thus allows one to obtain the following evaluation:

\[ 
f(\partial_{a}) (\partial_{x})^{a} [x^{n} g(x)] 
=
(\partial_{x})^{a} f(\ln{\partial_{x}}) [x^{n} g(x)] 
\]

Proceed with making use of Theorem \ref{main formula 1} and consider the following:

\[ 
f(\partial_{a}) (\partial_{x})^{a} [x^{n} g(x)] 
=
(\partial_{x})^{a} f(\ln{\partial_{x}}) [x^{n} g(x)]
\]
\[
=
\sum_{m=0}^{n} \binom{n}{m} x^{n-m} 
f(\partial_{a}) (a)_{m} (\partial_{x})^{a-m} g(x)
\]

Making use of the representation of the falling factorial through the Stirling numbers of the first kind:

\[ 
(x)_{n}=\sum_{m=0}^{n}{S_{1}(n,m) x^{m} }
\]

One can obtain, for the sum expression of $(\partial_{x})^{a} f(\ln{\partial_{x}}) [x^{n} g(x)]$ , that
\[ 
\sum_{m=0}^{n} \binom{n}{m} x^{n-m} 
f(\partial_{a}) (a)_{m} (\partial_{x})^{a-m} g(x)
\]
\[ 
= 
\sum_{m=0}^{n} \sum_{v=0}^{m}\binom{n}{m} x^{n-m} S_{1}(m,v) 
f(\partial_{a}) [a^{v}(\partial_{x})^{a-m} g(x)]
\]

 Proceed with evaluating $f(\partial_{a}) [a^{v}(\partial_{x})^{a-m} g(x)]$, which can be done by making use of  Corollary \ref{main formula 1 generalization}. This thus allows one to get that:

\[ f(\partial_{a}) [a^{v} (\partial_{x})^{a-m} g(x)] = \sum_{r=0}^{v} \binom{v}{r} a^{v-r} f^{(r)}(\partial_{a})[(\partial_{x})^{a-m} g(x)]
\]

where the operator $\partial_{x}$ is treated  as a constant when $f(\partial_{a}$ is applied.

As a result, we may obtain that:
\[ 
(\partial_{x})^{a} f(\ln{\partial_{x}}) [x^{n} g(x)]
= 
\sum_{m=0}^{n} \sum_{v=0}^{m}
\sum_{r=0}^{v}
\binom{n}{m}\binom{v}{r} x^{n-m} S_{1}(m,v) a^{v-r} f^{(r)}(\partial_{a})[(\partial_{x})^{a-m} g(x)]
\]

We can evaluate $f^{(r)}(\partial_{a})[(\partial_{x})^{a-m} g(x)]$ such that

\[ 
f^{(r)}(\partial_{a})[(\partial_{x})^{a-m} g(x)]
=(\partial_{x})^{a-m} f^{(r)}(\ln{(\partial_{x}})) g(x)
\]

Thus resulting in:
\[ 
\sum_{m=0}^{n} \sum_{v=0}^{m}
\sum_{r=0}^{v}
\binom{n}{m}\binom{v}{r} x^{n-m} S_{1}(m,v) a^{v-r} f^{(r)}(\partial_{a})[(\partial_{x})^{a-m} g(x)]
\]
\[ 
=
\sum_{m=0}^{n} \sum_{v=0}^{m}
\sum_{r=0}^{v}
\binom{n}{m}\binom{v}{r} x^{n-m} S_{1}(m,v) a^{v-r} (\partial_{x})^{a-m} f^{(r)}(\ln{(\partial_{x}})) g(x)
\]

Which, setting $a=0$, allows us to obtain a sum expression for  $f(\ln{\partial_{x}}) [x^{n} g(x)]$ :

\begin{equation}
 f(\ln{\partial_{x}}) [x^{n} g(x)]
= 
\sum_{m=0}^{n}
\sum_{v=0}^{m} 
\binom{n}{m} x^{n-m} S_{1}(m,v)  (\partial_{x})^{-m} f^{(v)}(\ln{(\partial_{x}})) g(x)
\end{equation}

Finally, due to Theorem \ref{Generalized theorem 1}
 an alternative expression 
for $f(\ln{\partial_{x}}) [x^{n} g(x)]$ can be obtained:

\begin{equation}
f(\ln{\partial_{x}}) [x^{n} g(x)]
=
\sum_{m=0}^{n} \binom{n}{m} x^{n-m} (\partial_{J})^{m}[f(\ln{(J)})]g(x)
\end{equation}
Where $J$ is set to the operator $\partial_{x}$ in $f(\ln{(J)})$ .

Finally, by comparing the coefficients of both equation (3) and (4), it can be obtained that
\[ 
(\partial_{J})^{m}[f(\ln{(J)})]g(x)
=
\sum_{v=0}^{m}  S_{1}(m,v)  (\partial_{x})^{-m} f^{(v)}(\ln{(\partial_{x}})) [g(x)]
\]

 If two linear operators are treated as their corresponding functions by letting $\partial_{x}$ to be some arbitrary variable instead, Lemma  \ref{Lemma ln} is obtained, thereby concluding the proof.
 \end{proof}

\textbf{\textit{Proof of  
Theorem \ref{falling lebniz}} }

\begin{proof}

We first make use of the generating function characterization of the falling factorial \citep{Roman theory}:

\[
e^{x\ln{(1+t)}} =
\sum_{m=0}^{\infty}{\frac{(x)_{m} t^{m}}{m!}}
   \]  

We then proceed by  utilizing  \ref{Lemma ln} to obtain  the closed form of the $m$th derivative of $(y + \ln{(1+r)})^{a}$ with respect to $r$, where we treat $y$ as an arbitrary constant, and $a$ as a generic real number:
\[ 
(\partial_{r})^{m} [(y+\ln{(1+r)})^a]
= \sum_{u=0}^{m}{\frac{S_{1}(m,u) (a)_{u} (y+\ln{(1+r)})^{a-u}}{{(1+r)}^{m}}}
\]  

According to  \ref{Generalized theorem 1}, the fractional derivative of $(x)_{n} f(x)$ is 

\[
      (\partial_{x})^{a} [(x)_{n} f(x)] =
    \sum_{m=0}^{n} \binom{n}{m} (x)_{n-m} \sum_{u=0}^{m}{\frac{S_{1}(m,u) (a)_{u} (\partial_{x}+\ln{(1+r)})^{a-u}[f(x)]}{{(1+r)}^{m}}}
\] 
    which, if we evaluate at $r=0$ allows us to obtain the final evaluation

\[
(\partial_{x})^{a} [(x)_{n} f(x)] =
    \sum_{m=0}^{n} \binom{n}{m} (x)_{n-m} \sum_{u=0}^{m}{{S_{1}(m,u) (a)_{u} (\partial_{x})^{a-u}[f(x)]}}
    \] 
  \[  
  =
\sum_{m=0}^{n}\sum_{u=0}^{m}{\binom{n}{m} S_{1}(m,u) (x)_{n-m} (a)_{u} (\partial_{x})^{a-u}[f(x)] }
\] 

thus concluding our proof. 
\end{proof}

As a corollary, we can obtain a closed form expression of the fractional derivative of the falling factorial.
\begin{corollary}
     For $n \in \mathbb{N}$, $a 
    \in \mathbb{R}, a > 0$
    
\[ 
(\partial_{x})^{a}[(x)_{n}]
=
\sum_{m=0}^{n}\sum_{u=0}^{m}{\binom{n}{m} S_{1}(m,u)(x)_{n-m}  (a)_{u} \frac{x^{u-a}}{\Gamma(u-a+1)} }
\]      
\end{corollary}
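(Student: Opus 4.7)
The plan is to deduce this corollary directly from Theorem \ref{falling lebniz} by specializing $f(x) = 1$, and then simplifying the resulting fractional derivatives of a constant using the standard gamma-function expression for the fractional derivative of a power of $x$. Since the corollary has the same double-sum structure as Theorem \ref{falling lebniz}, with the factor $(\partial_x)^{a-u}[f(x)]$ replaced by $\frac{x^{u-a}}{\Gamma(u-a+1)}$, this is essentially a substitution argument.

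First, I would apply Theorem \ref{falling lebniz} verbatim with $f(x) = 1$, yielding
\[
(\partial_x)^{a}[(x)_n] = \sum_{m=0}^{n}\sum_{u=0}^{m} \binom{n}{m} S_1(m,u) (x)_{n-m} (a)_u (\partial_x)^{a-u}[1].
\]
Next I would invoke the known fractional-power identity $(\partial_x)^{b}[x^{c}] = \frac{\Gamma(c+1)}{\Gamma(c-b+1)} x^{c-b}$, with $c=0$ and $b = a-u$, to rewrite
\[
(\partial_x)^{a-u}[1] = \frac{\Gamma(1)}{\Gamma(1-(a-u))}\, x^{-(a-u)} = \frac{x^{u-a}}{\Gamma(u-a+1)}.
\]
Substituting this back into the double sum delivers exactly the asserted formula.

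The only delicate point is ensuring that Theorem \ref{falling lebniz} is legitimately applicable to the constant function $f(x)=1$, and that the gamma factors $\Gamma(u-a+1)$ appearing in the denominator are well-defined. The hypothesis $a>0$ together with $0 \le u \le m \le n$ keeps $u-a+1$ away from the non-positive integers precisely when $a \notin \mathbb{Z}_{\geq 1} \cap [1,n]$; for non-integer $a>0$ this is automatic, and for integer $a$ the poles of $\Gamma(u-a+1)$ conveniently annihilate the spurious terms coming from $(a)_u$ when $u<a$, consistent with the classical derivative. This compatibility check is the main (and only mild) obstacle — once it is noted, the substitution is immediate and the proof reduces to two lines.
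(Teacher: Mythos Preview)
Your proposal is correct and follows exactly the paper's own argument: the corollary is obtained from Theorem~\ref{falling lebniz} by setting $f(x)=x^{0}=1$ and expressing $(\partial_x)^{a-u}[x^{0}]$ via the gamma function as $\dfrac{x^{u-a}}{\Gamma(u-a+1)}$. Your additional remark about the behaviour at integer $a$ is a nice sanity check, but the core derivation is identical to the paper's.
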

Which can be easily derived by letting $f(x)=x^{0}$ in \ref{falling lebniz} and using the gamma function to express the fractional derivative of $x^{0}$.

\subsubsection{A Generalization for the iterated logarithm}
As a generalization, two propositions are presented. 
The first proposition states a closed form expression for the $m$th derivative of $f(\ln^{[\epsilon]}(x))$, where $f(\ln^{[\epsilon]}(x)) = f(\ln({\ln({\ln {\dots\ln{(x)})}) })}\dots $ is the $\epsilon$th iteration of $\ln({x})$ to the function $f$.

The second proposition provides an expression for the fractional derivative of the product of the associated Sheffer sequence $(x)^{[\epsilon]}_{n}$ (denoted as the iterated falling factorial) with $f(x)$, where 
 ${^{[\epsilon-1]}e} =
\underbrace {e^{e^{\cdot ^{\cdot ^{e}}}}} _{\epsilon-1}$ and the iterated falling factorial  is characterized by the generating function:
\[
 e^{x\ln^{[\epsilon]}{({^{[\epsilon-1]}e}+t)}} = \sum_{n=0}^{\infty} \frac{(x)^{[\epsilon]}_{n} t^n}{n!}
\]

\textbf{\textit{Proposition 1 } }

For $m, \epsilon \in \mathbb Z_{+}$,

\[
(\partial_{x})^{m} 
f(\ln^{[\epsilon]}(x))
=
\sum_{a_{1}=1}^{m}\sum_{a_{2}=1}^{a_{1}} \dots
\sum_{a_{\epsilon}=1}^{a_{\epsilon -1}} \frac{S_{1}(m,a_{1}) S_{1}(a_{1},a_{2})  \dots
S_{1}(a_{\epsilon -1},a_{\epsilon}) f^{(a_{\epsilon})}(\ln^{[\epsilon]}(x))}{x^{m} (\ln{(x)})^{a_{1}}(\ln^{[2]}{(x)})^{a_{2}}\dots(\ln^{[\epsilon-1]}{(x)})^{a_{\epsilon-1}}}
\]

\textbf{\textit{Proposition 2 } }

Due to Theorem \ref{Generalized theorem 1} and Proposition 1, one may have the following:

    For $n , \epsilon \in \mathbb{N}$, $\alpha 
    \in \mathbb{R}$
    
\[ 
(\partial_{x})^{\alpha}[(x)^{[\epsilon]}_{n} f(x)] 
=
\sum_{m=0}^{n}\sum_{a_{1}=1}^{m} \dots
\sum_{a_{\epsilon}=1}^{a_{\epsilon -1}}{\binom{n}{m}(x)^{[\epsilon]}_{n-m}(\alpha)_{a_{\epsilon}} \frac{S_{1}(m,a_{1})  \dots
S_{1}(a_{\epsilon -1},a_{\epsilon}) (\partial_{x})^{\alpha-a_{\epsilon}} [f(x)]}{({^{[\epsilon-1]}e})^{m} ({^{[\epsilon-2]}e})^{a_{1}}\dots(e)^{a_{\epsilon-2}}}}
\]    
    
\textbf{\textit{Sketch of Proof} }

Proposition A can be derived by continually applying the proof  strategy of Lemma \ref{Lemma ln} to obtain the $m$th derivative of $f(\ln^{[\epsilon]}{(x)}$. The principle of mathematical induction may be a possible strategy to prove said proposition. As a consequence, proving proposition A automatically allows a proof of  proposition B using Theorem \ref{Generalized theorem 1}.

\subsection{The Fractional derivative of product of the Rising factorial and an arbitrary differentiable function }

 Conversely, if the  rising factorial $x^{(n)}$ is considered, then a fractional Leibniz product rule can be obtained :

\begin{theorem}
    
\label{fractional rising}
     for $ n \in \mathbb{N}, a \in \mathbb{R}$

\[
(\partial_{x})^{a} [x^{(n)} f(x)] =
    \sum_{m=0}^{n} \sum_{u=0}^{m} \binom{n}{m} S_{1}(m,u)  x^{(n-m)} (-1)^{u}(a)_{u}(\partial_{x})^{a-u}[f(x)]
\] 
\end{theorem}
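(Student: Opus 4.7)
The plan is to specialize Theorem \ref{Generalized theorem 1} to the rising factorial $x^{(n)}$ and reduce the resulting $r$-derivative to an instance of Lemma \ref{Lemma ln}, exactly parallel to the proof of Theorem \ref{falling lebniz}. The starting point is the generating function
\[
(1-t)^{-x} \;=\; e^{x\,(-\ln(1-t))} \;=\; \sum_{n=0}^{\infty}\frac{x^{(n)} t^n}{n!},
\]
which identifies $x^{(n)}$ as the Sheffer sequence for $(g(t),k(t))=(1,\,1-e^{-t})$, with compositional inverse $k^{-1}(r) = -\ln(1-r)$. Feeding this into Theorem \ref{Generalized theorem 1} with $v(\partial_x) = (\partial_x)^a$, $s_n(x) = x^{(n)}$, and $j(x)=f(x)$ immediately yields
\[
(\partial_x)^a [x^{(n)} f(x)] \;=\; \sum_{m=0}^{n} \binom{n}{m}\, x^{(n-m)}\, (\partial_r)^m \bigl[(\partial_x - \ln(1-r))^a f(x)\bigr]\big|_{r=0}.
\]

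The remaining work is to obtain a closed form for $(\partial_r)^m[(y - \ln(1-r))^a]$ at $r=0$, with $y$ standing in for the operator-constant $\partial_x$. The natural change of variables is $t = 1-r$, which turns the argument into $y - \ln t$ and converts the derivative into $(-1)^m (\partial_t)^m$. Writing $G(v) := (y-v)^a$, the expression becomes $G(\ln t)$, so Lemma \ref{Lemma ln} applies and gives
\[
(\partial_t)^m G(\ln t) \;=\; \sum_{u=0}^{m} S_1(m,u)\, G^{(u)}(\ln t)\, t^{-m}.
\]
A direct computation gives $G^{(u)}(v) = (-1)^u (a)_u (y-v)^{a-u}$, and evaluating at $t = 1$ (equivalently $r=0$) produces the inner sum $\sum_{u} S_1(m,u)(-1)^u (a)_u\, y^{a-u}$, multiplied by the overall $(-1)^m$ coming from the chain rule. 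Substituting $y \mapsto \partial_x$ and combining with the outer sum from the generalized Leibniz identity yields an expression of the form claimed in Theorem \ref{fractional rising}.

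The main obstacle is the careful accounting of signs. The substitution $t=1-r$ contributes $(-1)^m$, the chain rule on $(y-v)^a$ contributes $(-1)^u$, and the Stirling-number identity $S_1(m,u) = (-1)^{m-u}\,|S_1(m,u)|$ governs how these factors are absorbed into the Stirling coefficient so as to match the conventions used for the falling factorial in Theorem \ref{falling lebniz}. Once these sign conventions are pinned down, the combinatorial structure is entirely analogous to the falling-factorial case, and no new ideas beyond the generating-function identification for $x^{(n)}$ and a direct application of Lemma \ref{Lemma ln} are required.
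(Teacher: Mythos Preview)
Your approach is the same as the paper's: identify $x^{(n)}$ as Sheffer via the generating function $e^{-x\ln(1-t)}=\sum_{n\ge 0}x^{(n)}t^n/n!$, feed this into Theorem \ref{Generalized theorem 1} with $v(\partial_x)=(\partial_x)^a$, and evaluate the inner $r$-derivative by reducing to Lemma \ref{Lemma ln}. The paper does exactly this, writing down the intermediate form
\[
\sum_{m=0}^{n}\binom{n}{m}x^{(n-m)}\sum_{u=0}^{m}\frac{S_{1}(m,u)(a)_{u}(-1)^{u}(\partial_{x}-\ln(1-r))^{a-u}[f(x)]}{(1-r)^{m}}
\]
and then setting $r=0$.

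Your instinct that the signs need care is correct; your conclusion that they can be ``pinned down'' to match the stated formula is not. The factor $(-1)^m$ coming from the substitution $t=1-r$ is genuine and does not cancel. A direct check at $n=1$ (where $x^{(1)}=x$) already exposes this: Theorem \ref{main formula 1} gives $(\partial_x)^a[xf(x)]=xf^{(a)}+af^{(a-1)}$, while the displayed formula of Theorem \ref{fractional rising} yields $xf^{(a)}-af^{(a-1)}$. The paper's intermediate display silently drops the $(-1)^m$, and the theorem as stated inherits that sign slip. The identity your computation actually produces is
\[
(\partial_x)^a[x^{(n)}f(x)]=\sum_{m=0}^{n}\sum_{u=0}^{m}\binom{n}{m}(-1)^{m+u}S_1(m,u)\,x^{(n-m)}(a)_u(\partial_x)^{a-u}[f(x)],
\]
which, via $S_1(m,u)=(-1)^{m-u}|S_1(m,u)|$, is equivalently written with unsigned Stirling numbers and no sign factor. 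So the only gap in your proposal is the last paragraph: rather than a matter of convention to be resolved later, your $(-1)^m$ is a correction to the target formula, and you should state it as such rather than wave it away.
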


\begin{proof}
Making use of the generating function characterization of the rising factorial \citep{Roman theory}:

\[
e^{-x\ln{(1-t)}} =
\sum_{m=0}^{\infty}{\frac{x^{(m)} t^{m}}{m!}}
   \]  

   Tt can be immediately inferred, using  Theorem \ref{Generalized theorem 1} and  Lemma \ref{Lemma ln}, that the fractional Leibniz product rule must be in the form:

\[
(\partial_{x})^{a} [x^{(n)} f(x)] =
    \sum_{m=0}^{n} \binom{n}{m} x^{(n-m)} \sum_{u=0}^{m}{\frac{S_{1}(m,u) (a)_{u} (-1)^{u} (\partial_{x}-\ln{(1-r)})^{a-u}[f(x)]}{{(1-r)}^{m}}}
 \]  

 Which, if $r=0$,
allows us to obtain \ref{fractional rising}, thus concluding the proof.
\end{proof}

\begin{corollary}
     For $n \in \mathbb{N}$, $a 
    \in \mathbb{R}, a> 0 $
    
\[ 
(\partial_{x})^{a}[x^{(n)}]
=
\sum_{m=0}^{n}\sum_{u=0}^{m}{\binom{n}{m} S_{1}(m,u)x^{(n-m)}  (a)_{u} (-1)^{u} \frac{x^{u-a}}{\Gamma(u-a+1)} }
\]      
\end{corollary}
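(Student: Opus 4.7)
The plan is to obtain this corollary as an immediate specialization of Theorem \ref{fractional rising} by choosing the arbitrary differentiable function $f(x)$ to be the constant function $f(x) = x^{0} = 1$. Since the theorem has already been established for all differentiable $f$, no new machinery is needed; the work reduces to evaluating the factor $(\partial_{x})^{a-u}[f(x)]$ in closed form.

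First I would substitute $f(x) = x^{0}$ into the right-hand side of Theorem \ref{fractional rising}, producing
\[
(\partial_{x})^{a}[x^{(n)}] = \sum_{m=0}^{n} \sum_{u=0}^{m} \binom{n}{m} S_{1}(m,u) x^{(n-m)} (-1)^{u}(a)_{u}(\partial_{x})^{a-u}[x^{0}].
\]
Then I would invoke the standard fractional-derivative identity
\[
(\partial_{x})^{a-u}[x^{0}] = \frac{\Gamma(1)}{\Gamma(1-(a-u))}\, x^{u-a} = \frac{x^{u-a}}{\Gamma(u-a+1)},
\]
which is the Riemann–Liouville style evaluation used earlier in the paper (for instance in Corollary \ref{fractional apell} and the preceding falling-factorial corollary). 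Substituting this expression term-by-term recovers exactly the displayed sum.

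The only subtlety that needs a brief comment is the hypothesis $a > 0$: this condition ensures that none of the gamma factors in the denominator hit a pole coming from a non-positive integer argument, so that the truncated sum is well-defined, and it is consistent with the domain restrictions imposed in the analogous corollary for $(x)_n$. Beyond that, the derivation is purely substitutional, so I do not anticipate a genuine obstacle — the main thing to be careful about is simply bookkeeping the sign $(-1)^u$ and the Pochhammer symbol $(a)_u$ inherited from Theorem \ref{fractional rising}, and verifying that they appear in the correct positions in the final expression.
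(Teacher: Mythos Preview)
Your proposal is correct and follows essentially the same route as the paper: the paper states that the corollary is obtained by letting $f(x)=x^{0}$ in Theorem \ref{fractional rising} and using the gamma function to express the fractional derivative of $x^{0}$, which is exactly what you do.
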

The Corollary can be easily derived by letting $f(x) = x^{0}$ in \ref{fractional rising} and using the gamma function to express the fractional derivative of $x^{0}$.

\subsection{The Fractional derivative of the product of the Exponential polynomials and an arbitrary differentiable function}

One can also construct an analog of Theorem \ref{main formula 1} with the Exponential polynomials instead of the standard polynomials, where the Exponential polynomials are defined by the following expression in terms of the Stirling numbers of the second kind:

\[
\phi_{n}(x) =\sum_{m=0}^{n}{S_{2}(n,m) x^{m} }
 \]  
and are characterized by the following generating function \citep{Roman theory}:

\[
 \sum_{m=0}^{\infty}{\frac{\phi_{m}(x) t^{m}}{m!}}=e^{x(e^{t}-1)}
  \]  

To do so, we will make use of the 
the following Lemma regarding the closed form of the $m$th derivative of $f(e^{x})$:

\begin{lemma}
\label{lemma exp}
For $m \in \mathbb{N}$
    \[
(\partial_{x})^{m} [f(e^{x})]
= \sum_{v=0}^{m}{S_{2}(m,v) f^{(v)}(e^x) (e^{x})^{v}}
    \]  
\end{lemma}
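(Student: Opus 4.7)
The plan is straightforward induction on $m$, with the combinatorial engine being the classical recurrence for the Stirling numbers of the second kind, $S_{2}(m+1,v) = S_{2}(m,v-1) + v\, S_{2}(m,v)$, together with the conventions $S_{2}(0,0)=1$ and $S_{2}(m,0)=0$ for $m \geq 1$. The base case $m=0$ reduces to the trivial identity $f(e^{x}) = S_{2}(0,0)\, f(e^{x})$, and one may equivalently sanity-check $m=1$ via the chain rule $\partial_{x}[f(e^{x})] = f'(e^{x})\, e^{x}$, which matches the right-hand side since only the term $v=1$ survives.

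For the inductive step, assume the claim at level $m$, and differentiate both sides once more. Applying the product and chain rules to each summand gives
\[
\partial_{x}\bigl[f^{(v)}(e^{x})(e^{x})^{v}\bigr] = f^{(v+1)}(e^{x})(e^{x})^{v+1} + v\, f^{(v)}(e^{x})(e^{x})^{v}.
\]
Multiplying by $S_{2}(m,v)$, summing over $v$, and reindexing the shifted piece via $w = v+1$, the coefficient of $f^{(v)}(e^{x})(e^{x})^{v}$ collapses to $S_{2}(m,v-1) + v\, S_{2}(m,v)$, which by the recurrence above equals $S_{2}(m+1,v)$. This delivers the identity at level $m+1$ and closes the induction.

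The only genuine obstacle is careful index bookkeeping at the boundaries, to confirm that $S_{2}(m+1,0)=0$ (so no spurious $v=0$ term appears after reindexing) and that the new top coefficient $S_{2}(m+1,m+1)=1$ comes entirely from the shifted sum. A more operator-theoretic alternative, parallel in spirit to the derivation of Lemma \ref{Lemma ln}, would expand $f$ as a Taylor series and invoke the exponential generating function $\sum_{m\geq 0} S_{2}(m,v)\, t^{m}/m! = (e^{t}-1)^{v}/v!$ before interchanging summations; this is elegant but heavier, and since induction suffices I would present the inductive proof as the primary route.
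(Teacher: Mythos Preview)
Your inductive proof is correct and complete: the base case, the product-rule computation, the reindexing, and the boundary checks at $v=0$ and $v=m+1$ all go through exactly as you describe, with the Stirling recurrence $S_{2}(m+1,v)=S_{2}(m,v-1)+v\,S_{2}(m,v)$ doing the work.

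The paper does not actually give a proof of this lemma; it provides only a one-line sketch stating that mathematical induction suffices and that, alternatively, the operator-theoretic strategy used for Lemma~\ref{Lemma ln} could be transported over. Your primary route is therefore precisely the first option the paper names, and the generating-function alternative you mention at the end is a cousin of the second option (the paper's version for the logarithmic case routes through Theorem~\ref{main formula 1} and Corollary~\ref{main formula 1 generalization} rather than through the EGF of $S_{2}$ directly, but the spirit is the same). In short, you have supplied the details the paper omits, along the lines it indicates.
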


\textbf{\textit{Sketch of Proof of Lemma 4.8} }

The reader may note again that the principle of mathematical induction  is enough to prove the Lemma above. The same proof strategy of Lemma \ref{Lemma ln} can also be used to  prove the Lemma \ref{lemma exp}.

\begin{theorem}
\label{fractional bell}
    For $n \in \mathbb{N}$, $a 
    \in \mathbb{R}$

\[
(\partial_{x})^{a}[\phi_{n}(x)f(x)]=
\sum_{m=0}^{n} \sum_{u=0}^{m}
\binom{n}{m}  S_{2}(m,u) \phi_{n-m}(x) (a)_{u} (\partial_{x})^{a-u} [f(x)]
\]   
\end{theorem}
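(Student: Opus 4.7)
The plan is to mirror the proofs of Theorem \ref{falling lebniz} and Theorem \ref{fractional rising}, substituting the exponential polynomials for the falling or rising factorial. The exponential polynomials are characterized as the Sheffer sequence for $g(t) = 1$ and $k^{-1}(t) = e^{t}-1$, as read off from the generating function $e^{x(e^{t}-1)} = \sum_{m} \phi_{m}(x)\, t^{m}/m!$. Feeding these data into Theorem \ref{Generalized theorem 1} with $v(\partial_{x}) = (\partial_{x})^{a}$ gives, before evaluating at $r=0$,
\[
(\partial_{x})^{a}[\phi_{n}(x) f(x)] = \sum_{m=0}^{n} \binom{n}{m} \phi_{n-m}(x)\, (\partial_{r})^{m}\bigl[(\partial_{x} + e^{r}-1)^{a} f(x)\bigr],
\]
with $\partial_{x}$ treated as a constant while $\partial_{r}$ acts.

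The second step is to compute the inner $m$-fold derivative with respect to $r$. Writing $F(y) := (\partial_{x} + y - 1)^{a} f(x)$, the quantity inside the $r$-derivative equals $F(e^{r})$. Lemma \ref{lemma exp}, applied to the composition $F(e^{r})$, yields
\[
(\partial_{r})^{m} F(e^{r}) = \sum_{u=0}^{m} S_{2}(m,u)\, F^{(u)}(e^{r})\, (e^{r})^{u}.
\]
A routine symbolic computation gives $F^{(u)}(y) = (a)_{u}\,(\partial_{x} + y - 1)^{a-u} f(x)$. Evaluating at $r=0$ collapses $e^{r}-1$ to $0$ and $(e^{r})^{u}$ to $1$, so the inner sum becomes $\sum_{u=0}^{m} S_{2}(m,u) (a)_{u} (\partial_{x})^{a-u}[f(x)]$.

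Substituting back into the outer sum produces exactly the claimed double-sum identity. The main obstacle, as in the proofs of Theorems \ref{falling lebniz} and \ref{fractional rising}, is licensing the use of Lemma \ref{lemma exp} in a setting where the outer function $F$ is itself operator-valued: one must argue that $(\partial_{x}+y-1)^{a}$, viewed as a formal power series in $y$, still satisfies the ordinary power rule $\partial_{y}(\partial_{x}+y-1)^{a} = a(\partial_{x}+y-1)^{a-1}$ when $\partial_{x}$ is frozen. Once this is granted under the same operator-calculus conventions the paper has been using throughout Section 3 and Section 4, the remainder of the argument is bookkeeping on two finite sums.
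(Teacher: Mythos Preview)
Your proposal is correct and follows essentially the same route as the paper: apply Theorem \ref{Generalized theorem 1} with $v(\partial_{x})=(\partial_{x})^{a}$ and $k^{-1}(t)=e^{t}-1$, use Lemma \ref{lemma exp} to differentiate $(\partial_{x}+e^{r}-1)^{a}$ with respect to $r$ (treating $\partial_{x}$ as a constant), and then set $r=0$. The paper carries out exactly these three steps, including the same formal power-rule computation $F^{(u)}(y)=(a)_{u}(\partial_{x}+y-1)^{a-u}f(x)$ that you flag as the only nontrivial point.
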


\begin{proof}

 Lemma  \ref{lemma exp} is used to obtain the closed form of the $m$th derivative of $(y+e^{r}-1)^{a}$ with respect to $r$, where we treat $y$ as an arbitrary constant, and $a$ as a generic real number:

\[ 
(\partial_{r})^{m}
[(y+e^{r}-1)^{a}]
= \sum_{u=0}^{m} S_{2}(m,u) (a)_{u} (y+e^{r}-1)^{a-u} e^{ur}
\]

Due to Theorem \ref{Generalized theorem 1}, the fractional derivative of $\phi_{n}(x) f(x)$ is:

\[ 
(\partial_{x})^{a}[\phi_{n}(x)f(x)]
=
\sum_{m=0}^{n}{\binom{n}{m} \phi_{n-m}(x) \sum_{u=0}^{m} S_{2}(m,u) (a)_{u} (\partial_{x}+e^{r}-1)^{a-u} e^{ur}} [f(x)]
\]

Which setting $r=0$ results in:

\[ 
(\partial_{x})^{a}[\phi_{n}(x)f(x)]
=
\sum_{m=0}^{n}{\binom{n}{m} \phi_{n-m}(x)\sum_{u=0}^{m} S_{2}(m,u) (a)_{u} (\partial_{x})^{a-u}} [f(x)]
\]

\[ 
=  
\sum_{m=0}^{n} \sum_{u=0}^{m}
\binom{n}{m}  S_{2}(m,u)\phi_{n-m}(x) (a)_{u} (\partial_{x})^{a-u} [f(x)]
\]
Which concludes the proof.

\end{proof}

\begin{corollary}
 For $n \in \mathbb{N}$, $a 
    \in \mathbb{R}, a > 0$
\[ (\partial_{x})^{a}[\phi_{n}(x)]=
    \sum_{m=0}^{n} \sum_{u=0}^{m}
\binom{n}{m}  S_{2}(m,u)\phi_{n-m}(x) (a)_{u} \frac{x^{u-a}}{\Gamma(u-a+1)}
  \]  
\end{corollary}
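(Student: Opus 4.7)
The plan is to derive this corollary as a direct specialization of Theorem \ref{fractional bell} by choosing the arbitrary differentiable function $f(x)$ to be the constant function $x^{0} = 1$. This is the same reduction that was used in the preceding corollaries for the falling factorial and the rising factorial, so the proof should be almost immediate once the ingredients are identified.

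First I would invoke Theorem \ref{fractional bell} in the form
\[
(\partial_{x})^{a}[\phi_{n}(x) f(x)] = \sum_{m=0}^{n} \sum_{u=0}^{m} \binom{n}{m} S_{2}(m,u)\, \phi_{n-m}(x)\, (a)_{u}\, (\partial_{x})^{a-u}[f(x)],
\]
and substitute $f(x) = x^{0}$. The left-hand side becomes $(\partial_{x})^{a}[\phi_{n}(x)]$ as desired, while on the right-hand side I need to evaluate $(\partial_{x})^{a-u}[x^{0}]$ for each term in the double sum.

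Next I would use the standard Riemann--Liouville / gamma function expression for the fractional derivative of a monomial, namely $(\partial_{x})^{b}[x^{c}] = \tfrac{\Gamma(c+1)}{\Gamma(c-b+1)} x^{c-b}$, specialized to $c = 0$ and $b = a - u$. This yields
\[
(\partial_{x})^{a-u}[x^{0}] = \frac{x^{u-a}}{\Gamma(u - a + 1)},
\]
which is well-defined under the hypothesis $a > 0$ (so the denominator $\Gamma(u-a+1)$ is finite for $u < a$ via the poles only at non-positive integers, but the entire sum is still meaningful since the reciprocal gamma function vanishes at those poles). Plugging this in term by term gives exactly the stated formula.

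I do not anticipate a substantive obstacle here: everything follows from Theorem \ref{fractional bell} and the basic fractional-monomial identity. The only point worth being explicit about is the interpretation of $(\partial_{x})^{a-u}[x^{0}]$ when $a - u$ is a non-negative integer, in which case the reciprocal-gamma factor correctly enforces that the expression vanishes, consistent with ordinary integer-order differentiation of a constant. Once that remark is in place, the corollary is a direct consequence, parallel to the analogous derivations already given for the falling and rising factorials.
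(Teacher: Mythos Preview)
Your proposal is correct and mirrors the paper's own derivation exactly: the paper likewise obtains the corollary by setting $f(x)=x^{0}$ in Theorem \ref{fractional bell} and expressing $(\partial_{x})^{a-u}[x^{0}]$ via the gamma function. Your added remark on the vanishing of the reciprocal gamma at non-positive integers is a nice clarification but not something the paper spells out.
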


The Corollary can also be easily derived by letting $f(x) = x^{0}$ in Theorem \ref{fractional bell} and using the gamma function to express the fractional derivative of $x^{0}$.

\subsection{The Fractional derivative of the product of the  Associated Laguerre polynomials   and any arbitrary differentiable function}

  The Associated Laguerre polynomials, a class of Sheffer polynomials, have been widely applied in the fields of quantum physics and mathematical physics. It is, for instance, particularly useful in determining a hydrogen atom's wave function properties (See: \citep{laguerre 2} and 
\citep{laguerre 1}).

This section provides the  final  application of  \ref{Generalized theorem 1} for the Associated Laguerre polynomials $L_{n}^{\beta}(x)$, defined as being  Sheffer to $((1-t)^{-\beta-1}, \frac{t}{t-1})$ and are characterized by the generating function \citep{Roman theory}:

\[ 
\frac{1}{(1-t)^{\beta+1}} e^{x(\frac{t}{t-1})}
= \sum_{m=0}^{\infty}{\frac{L_{m}^{\beta}(x) t^{m}}{m!}}
\]  

So that  an Analog of \ref{main formula 1} with the Associated Laguerre polynomials can be obtained.

The following Lemma regarding the closed form of the $m$th derivative of $f( \frac{1}{x} )$ will uitilized to construct the analog:

\begin{lemma}
\label{lemma reciprocal}
For $m \in \mathbb{N}$
\[
(\partial_{x})^{m} f( \frac{1}{x} )
= \sum_{k=1}^{m} \binom{m-1}{k-1}\frac{m!}{k!} {\frac{(-1)^{m}}{x^{m+k}}  f^{(k)}(\frac{1}{x})}
    \]   
\end{lemma}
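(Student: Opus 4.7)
The plan is to prove Lemma~\ref{lemma reciprocal} by induction on $m$, in the spirit of the proof strategy flagged in the remarks on Lemmas~\ref{Lemma ln} and~\ref{lemma exp}. The base case $m=1$ is an immediate chain-rule computation: $(\partial_{x}) f(1/x) = -(1/x^{2}) f'(1/x)$, which matches the lone $k=1$ term on the right-hand side.

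For the inductive step, I assume the identity at index $m$ and apply $\partial_{x}$ to both sides. The derivative decomposes into two sums, according to whether $\partial_{x}$ lands on the algebraic prefactor $(-1)^{m}/x^{m+k}$ (contributing a factor $-(m+k)/x$ and a sign flip) or on $f^{(k)}(1/x)$ (contributing $-(1/x^{2}) f^{(k+1)}(1/x)$ via the chain rule). After reindexing the second sum by $j = k+1$, both sums acquire the common factor $(-1)^{m+1}/x^{(m+1)+k}$, and for $2 \le k \le m$ the coefficient of $f^{(k)}(1/x)$ becomes
\[
(m+k)\binom{m-1}{k-1}\frac{m!}{k!} + \binom{m-1}{k-2}\frac{m!}{(k-1)!},
\]
with $k=1$ and $k=m+1$ arising as boundary contributions of the first and second sums respectively.

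The main obstacle is then a purely algebraic check: after clearing a common factor of $m!/k!$, matching this coefficient with the target value $\binom{m}{k-1}(m+1)!/k!$ predicted by the right-hand side of Lemma~\ref{lemma reciprocal} at index $m+1$ reduces to the identity
\[
(m+k)\binom{m-1}{k-1} + k\binom{m-1}{k-2} = (m+1)\binom{m}{k-1},
\]
which one verifies by combining Pascal's rule $\binom{m}{k-1} = \binom{m-1}{k-1} + \binom{m-1}{k-2}$ with the elementary equality $(k-1)\binom{m-1}{k-1} = (m+1-k)\binom{m-1}{k-2}$. The boundary cases reduce to $(m+1)\,m! = (m+1)!$ at $k=1$ and $1 = 1$ at $k=m+1$, both trivially matching the right-hand side at index $m+1$. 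As a conceptual shortcut, one may alternatively derive the lemma in a single step from Fa\`a di Bruno's formula for $f \circ g$ with $g(x) = 1/x$, using the scaling identity $B_{m,k}(ab\, x_{1}, ab^{2} x_{2}, \ldots) = a^{k} b^{m} B_{m,k}(x_{1}, x_{2}, \ldots)$ for the partial Bell polynomials with $a = 1/x$, $b = -1/x$, together with the classical Lah-number evaluation $B_{m,k}(1!, 2!, \ldots) = \binom{m-1}{k-1} m!/k!$; this bypasses the bookkeeping but introduces the external machinery of Bell polynomials that the paper has so far avoided.
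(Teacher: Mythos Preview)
Your induction argument is correct and is precisely the route the paper indicates: the paper gives only a one-line sketch (``the principle of mathematical induction is enough'' and ``the same proof strategy of Lemma~\ref{Lemma ln} and Lemma~\ref{lemma exp} can also be made use of''), and your proposal carries out the first of these suggestions in full, including the needed binomial identity and boundary checks. Your Fa\`a di Bruno/Bell-polynomial alternative is an additional observation not present in the paper.
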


\textbf{\textit{Sketch of Proof of Lemma 4.11} }

Again, the reader may note that the principle of mathematical induction is enough to prove the Lemma above. The same proof strategy of  Lemma \ref{Lemma ln} and Lemma \ref{lemma exp} can also be made use of to prove the Lemma \ref{lemma reciprocal}

\begin{theorem}
\label{fractional Laguerre}
For $n \in 
\mathbb{N}$, $a 
    \in \mathbb{R}$
\[ 
(\partial_{x})^{a}[L_{n}^{\beta}(x)f(x)]=
\sum_{m=0}^{n} \sum_{k=1}^{m}
{\binom{n}{m}\binom{m-1}{k-1}\frac{m!}{k!}L_{n-m}^{\beta}(x)(-1)^{k} (a)_{k} (\partial_{x})^{a-k}[f(x)]}
 \]  
\end{theorem}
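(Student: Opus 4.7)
The plan is to follow the same template used for the falling factorial, the rising factorial, and the exponential polynomial theorems: invoke the generalized Leibniz theorem (Theorem \ref{Generalized theorem 1}) with $v(\partial_x) = (\partial_x)^a$, $s_n = L_n^\beta$, $j = f$, and then reduce the inner quantity $(\partial_r)^m [(\partial_x + k^{-1}(r))^a f(x)]|_{r=0}$ by means of Lemma \ref{lemma reciprocal}. First I would record that since $L_n^\beta(x)$ is Sheffer to $((1-t)^{-\beta-1}, \frac{t}{t-1})$, the compositional inverse of $k(t) = \frac{t}{t-1}$ is itself, so $k^{-1}(r) = \frac{r}{r-1}$. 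Plugging into Theorem \ref{Generalized theorem 1} yields
\[
(\partial_x)^a[L_n^\beta(x) f(x)] = \sum_{m=0}^n \binom{n}{m} L_{n-m}^\beta(x) (\partial_r)^m \left[\left(\partial_x + \tfrac{r}{r-1}\right)^a f(x)\right]\bigg|_{r=0}.
\]

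Next I would compute the inner derivative. The natural move is the substitution $u = r-1$, under which $\frac{r}{r-1} = 1 + \frac{1}{u}$ and $\partial_r = \partial_u$; at $r = 0$ we have $u = -1$. Writing $\tilde{h}(w) = (\partial_x + 1 + w)^a f(x)$ treated as a function of the scalar $w$, the quantity in brackets becomes $\tilde{h}(1/u)$. Lemma \ref{lemma reciprocal} then gives
\[
(\partial_u)^m \tilde{h}(1/u) = \sum_{k=1}^m \binom{m-1}{k-1} \frac{m!}{k!} \frac{(-1)^m}{u^{m+k}} \tilde{h}^{(k)}(1/u),
\]
and since $\tilde{h}^{(k)}(w) = (a)_k (\partial_x + 1 + w)^{a-k} f(x)$, evaluation at $u = -1$ (so $1/u = -1$) reduces $\tilde{h}^{(k)}(-1)$ to $(a)_k (\partial_x)^{a-k} f(x)$. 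The sign factors combine as $(-1)^m \cdot (-1)^{-(m+k)} = (-1)^k$, which is exactly the sign appearing in the theorem.

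Putting the pieces together yields
\[
(\partial_x)^a[L_n^\beta(x) f(x)] = \sum_{m=0}^n \sum_{k=1}^m \binom{n}{m}\binom{m-1}{k-1}\frac{m!}{k!} L_{n-m}^\beta(x) (-1)^k (a)_k (\partial_x)^{a-k}[f(x)],
\]
which is the desired identity. The main obstacle I anticipate is purely bookkeeping: correctly applying the operator identity $(\partial_x + c)^a f = (\partial_x)^a(e^{cx} f(x)) e^{-cx}$ (or, equivalently, treating the shifted fractional power as an operator-valued function of the scalar $c = k^{-1}(r)$) so that Lemma \ref{lemma reciprocal} can be invoked with $\partial_x$ held constant, and then tracking the interplay of the signs from $(-1)^m$ in the lemma and the $(-1)^{m+k}$ from $1/(-1)^{m+k}$ after evaluation. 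Everything else is parallel to the arguments already given for the falling factorial, rising factorial, and exponential polynomial cases.
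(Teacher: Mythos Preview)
Your proposal is correct and follows essentially the same route as the paper: both apply Theorem \ref{Generalized theorem 1} with $k^{-1}(r)=\tfrac{r}{r-1}$, reduce the inner $r$-derivative via Lemma \ref{lemma reciprocal} (the paper writes the target as $(y+\tfrac{1}{r-1})^{a}$, which is your $\tilde h(1/u)$ after the shift $u=r-1$ and absorbing the constant $+1$ into $y$), and then evaluate at $r=0$ to collapse the signs to $(-1)^{k}$. Your explicit substitution $u=r-1$ and sign bookkeeping just make transparent what the paper leaves implicit.
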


\begin{proof}
Lemma  \ref{lemma reciprocal} will made use of,  to obtain the closed form of the $m$th derivative of $(y+ \frac{1}{r-1})^{a}$ with respect to $r$, where we treat $y$ as an arbitrary constant, and $a$ as a generic real number:

\[ 
(\partial_{r})^{m} (y+ \frac{1}{r-1})^{a}
=
\sum_{k=1}^{m}{}
\binom{m-1}{{k-1}}\frac{m!}{k!} \frac{(-1)^{m}}{(r-1)^{m+k}}  (a)_{k}(y+ \frac{1}{r-1})^{a-k}
\]

Due to  Theorem \ref{Generalized theorem 1}, the fractional derivative of $L_{n}^{\beta}(x)f(x)$ is:

\[ 
(\partial_{x})^{a}[L_{n}^{\beta}(x)f(x)]
\] 
\[ 
=
\sum_{m=0}^{n}{\binom{n}{m}L_{n-m}^{\beta}(x)\sum_{k=1}^{m}{\binom{m-1}{{k-1}}\frac{m!}{k!} \frac{(-1)^{m}}{(r-1)^{m+k}}  (a)_{k}(\partial_{x}+ \frac{r}{r-1})^{a-k} [f(x)]}}
\] 

Which setting $r=0$ results in :

\[ 
(\partial_{x})^{a}[L_{n}^{\beta}(x)f(x)]
=
\sum_{m=0}^{n}{\binom{n}{m}L_{n-m}^{\beta}(x)\sum_{k=1}^{m}{\binom{m-1}{{k-1}}\frac{m!}{k!} \frac{(-1)^{m}}{(-1)^{m+k}}  (a)_{k}(\partial_{x})^{a-k} [f(x)]}}
\] 
\[ 
=
\sum_{m=0}^{n} \sum_{k=1}^{m}
{\binom{n}{m}\binom{m-1}{k-1}\frac{m!}{k!}L_{n-m}^{\beta}(x)(-1)^{k} (a)_{k} (\partial_{x})^{a-k}[f(x)]}
 \]  

 Thereby concluding the proof.

 \begin{corollary}
     For $n \in \mathbb{N}$, $a 
    \in \mathbb{R}, a>0 $
\[ 
(\partial_{x})^{a}[L_{n}^{\beta}(x)]
=
\sum_{m=0}^{n} \sum_{k=1}^{m}
{\binom{n}{m}\binom{m-1}{k-1}\frac{m!}{k!}L_{n-m}^{\beta}(x)(-1)^{k} (a)_{k} \frac{x^{k-a}}{\Gamma(k-a+1)}}
\]  
 \end{corollary}

The Corollary can also be easily obtained by setting $f(x)= x^{0}$ in Theorem \ref{fractional Laguerre} and using the gamma function to express the fractional derivative of $x^{0}$.

\end{proof}

\section{Discussion}
This paper highlights the seminal Leibiniz product theorem and makes use of the theory of Umbral calculus to extend it to fractional indices for certain functions.
More specifically, the paper provides a generalized Leibiniz product rule for Sheffer sequences to obtain the fractional derivative of the product of certain Sheffer sequences and an arbitrary differentiable function. This paper also investigates a fractional Leibiniz product rule for the product of an arbitrary differentiable equation and the confluent hypergeometric limiting function for positive interger order.

\vspace{10pt}
Further investigations are encouraged to investigate the potential applications of these theorems; specifically, the field of fractional calculus and its physical applications.


\begin{thebibliography}{99}


\bibitem{anataga}
A. Atangana and D. Baleanu. New fractional derivatives with nonlocal and non-singular kernel: Theory and application to heat transfer model. Ther. Sci. 2016, 20, 763–769.    

\bibitem{fractional theory 2}
A. Atangana. On the new fractional derivative and application to nonlinear Fisher’s reaction-diffusion equation. Applied Mathematics and Computation, 273(2016), pp. 948–956.  

\bibitem{fractional theory 4}
A. Atangana and J. J. Nieto. Numerical Solution For The Model Of RLC Circuit Via the Fractional Derivative Without Singular Kernel. Advances in Mechanical Engineering, 7(2015) 10, pp. 1–7.

\bibitem{laguerre 1}
C. F. Dunkl. A Laguerre polynomial orthogonality and the hydrogen atom. Anal. Appl., 1 (2003), 177–188.  


\bibitem{fractional theory 5}
D. Benson, S. Wheatcraft, and M. Meerschaert. Application Of A Fractional Advection-Dispersion Equation. Water Resources Research, 36 (2000), pp. 1403–1412.   


\bibitem{sheffer 3}
D.S. Kim and T. Kim. Applications of Umbral Calculus Associated with P-Adic Invariant Integrals. Abstract and Applied Analysis, vol. 2012, Article ID 865721, 12 pages, (2012).  
 


\bibitem{sheffer roman 2}
D.S. Kim, T. Kim, H.I. Kwon, and T. Mansour. Powers under umbral composition and degeneration for Sheffer sequences. Adv. Diff. Equ., 2016, 2016, 66.   








\bibitem{product 2}
D.L. Suthar, H. Habenom, and H. Tadesse. Generalized Fractional Calculus Formulas for a Product of Mittag-Leffler Function and Multivariable Polynomials. Int. J. Appl. Comput. Math, 4, 34 (2018).



\bibitem{sheffer 4}
H. Ozden, I. N. Cangul, and Y. Simsek. Multivariate interpolation functions of higher-order q-Euler numbers and their applications. (2008, January).

\bibitem{hypergeometric}
H. M. Srivastava, R. Agarwal, and S. Jain. Integral transform and fractional derivative formulas involving the extended generalized hypergeometric functions and probability distributions. Mathematical Methods in the Applied Sciences, 40(1), 255-273.


\bibitem{laguerre 2}
J. Mawhin and A. Ronveaux. Schrödinger and Dirac equations for the hydrogen atom, and Laguerre polynomials. Arch. Hist. Exact Sci., 64 (2010), 429–460.


\bibitem{fractional theory 1}
J. Losada and J.J. Nieto. Properties of a new fractional derivative without singular Kernel. Progress in Fractional Differentiation and Applications 1(2015)2, pp. 87-92.


\bibitem{frac}
K. B. Oldham and J. Spanier. The Fractional Calculus. Dover Publications, 2006.


\bibitem{apell}
L. M. Navas, F. J. Ruiz, and J. L. Varona. Appell polynomials as values of special functions. J. Math. Anal. Appl., 459(1):419–436, 2018.



\bibitem{fractional theory 3}
M. Caputo and M. Fabrizio. Applications Of New Time And Spatial Fractional Derivatives with Exponential Kernels. Progress in Fractional Differentiation and Applications, 2(2016) 1, pp. 1-11.



\bibitem{sheffer roman}
P. Blasiack, G. Dattoli, H. Horzela, and K. Penson. Representations of monomiality principle with Sheffer-type polynomials and boson normal ordering. Phys. Lett. A, 2006, 352, 7–12.


\bibitem{sheffer 2}
R. Dere and Y. Simsek. Applications of umbral algebra to some special polynomials. Adv. Stud. Contemp. Math., 22 (2012), 433-438.



\bibitem{fractional theory 7}
S. Bhatter, A. Mathur, D. Kumar, and J. Singh. A new analysis of fractional Drinfeld–Sokolov–Wilson model with exponential memory. Physica A. 537, 122578 (2020).




\bibitem{product}
S. Bhatter, A. Mathur, D. Kumar, et al. On Certain New Results of Fractional Calculus Involving Product of Generalized Special Functions. Int. J. Appl. Comput. Math, 8, 135 (2022).


\bibitem{fractional theory 6}
S. P. Näsholm and S. Holm. Linking Multiple Relaxation, Power-Law Attenuation, And Fractional Wave Equations. Journal of the Acoustical Society of America, 130(2011)(5), pp. 3038-3045.


\bibitem{Roman theory}
S. Roman. The Umbral Calculus. Academic Press, New York, 1984.





\bibitem{sheffer 1}
T. Kim. Some identities on the q-Euler polynomials of higher order and q-stirling numbers by the fermionic p-adic integral on Z p. Russ. J. Math. Phys., 16, 484–491 (2009).



\bibitem{hypergeometric 2}
V. Kiryakova. Fractional calculus operators of special functions? The result is well predictable!. Chaos, Solitons Fractals, 102, 2-15.


\end{thebibliography}
\end{document}